\documentclass[14pt,a4paper]{extarticle}
\usepackage[T1,T2A]{fontenc} 
\usepackage[utf8]{inputenc} 
\usepackage[russian, english]{babel} 
\usepackage{geometry}
\usepackage{amsmath,amssymb,amsfonts,amsthm,mathrsfs, stmaryrd}
\usepackage{bbm}  % for \mathbbm 1 or ind
\usepackage{bbding}
\usepackage{graphicx,graphics,bm,array}

\usepackage{hyperref} % для ссылок

\numberwithin{equation}{section}

\theoremstyle{plain}
\newtheorem{thm}{Theorem}[section]
\newtheorem{prop}{Proposition}[section]
\newtheorem{cor}{Corollary}[section]
\newtheorem{lem}{Lemma}[section]

\theoremstyle{definition}
\newtheorem{defn}{Definition}
\newtheorem{ex}{Example}[section]

\theoremstyle{remark}
\newtheorem{rem}{Remark}

\newcommand{\cA}{\mathscr A}
\newcommand{\cB}{\mathscr B}
\newcommand{\cF}{\mathscr F}
\newcommand{\cE}{\mathscr E}

\newcommand{\cH}{\mathscr H}
\newcommand{\cM}{\mathscr M}
\newcommand{\cO}{\mathcal O}
\newcommand{\cP}{\mathscr P}
\newcommand{\cT}{\mathcal T}
\newcommand{\RR}{\mathbb R}
\newcommand{\RRl}{\overline{\mathbb R}}
\newcommand{\QQ}{\mathbb Q}
\newcommand{\NN}{\mathbb N}
\newcommand{\FF}{\mathbb F}
\newcommand{\PP}{\mathsf {P}}
\newcommand{\EE}{\mathsf {E}}
\newcommand{\ind}{\mathbbm {1}}

\title{Properties of a Special Type of Filtration and its Martingale Criteria}
\author{
    Assylliya K. Zhunussova\\
\small $^{1}$Lomonosov Moscow State University, Moscow, Russia \\
    \small \href{mailto:lichka\_archive@mail.ru}{lichka\_archive@mail.ru}\,\Envelope, \url{https://orcid.org/0000-0002-8801-8381}}
\date{}
\begin{document}
\maketitle
\thispagestyle{plain} 

\noindent \textbf{Key words:} martingale, local martingale, single jump filtration with initial information\\
\textbf{AMS Mathematics Subject Classification:} 60G07, 60G44, 60G48. 

\begin{abstract}
    This article investigates the structural properties of stochastic processes relative to a generalized single jump filtration, extending the framework introduced by A.A. Gushchin (2020) to the case of a non-trivial initial $\sigma$-algebra $\cH$. By leveraging the general theory of processes and optional projection techniques, we establish fundamental measurability criteria for random variables and a complete characterization of stopping times and adapted processes. Furthermore, we derive comprehensive martingale and local martingale criteria, providing necessary and sufficient conditions for the preservation of the martingale property in this extended setting.
\end{abstract}
\section{Introduction}

%The study of stochastic processes associated with a single jump time $\gamma$ dates back to the fundamental works of the French school (see \cite{Chou Meyer}, \cite{Dellacherie}, \cite{Dellacherie Meyer}). While the classical theory provides a robust framework for multivariate point processes, recent applications in credit risk and financial engineering (see \cite{Herdegen Herrmann}) demand more flexible filtration structures.

%This paper extends the framework established in Gushchin \cite{Gushchin} by incorporating a non-trivial initial $\sigma$-algebra $\cH$, which represents the information available at $t=0$. Such an extension is not merely technical; it allows for modeling scenarios where agents possess prior knowledge or baseline signals before the jump occurs.

%The presence of $\cH$ introduces significant complexities in the behavior of local martingales and the structure of optional projections. Our main contribution is a systematic characterization of the filtration properties and the derivation of comprehensive martingale criteria, bridging the gap between abstract process theory and applied jump-modeling.

In martingale theory and its applications, identifying the compensator of a piecewise constant process with a single jump is a classical problem. The foundation for this study was laid in the 1970s by the French school, notably in the seminal works of Dellacherie \cite{Dellacherie} and Chou and Meyer \cite{Chou Meyer}, who established the General Theory of Processes and provided comprehensive formulas for multivariate point processes.

Single-jump processes have since become a cornerstone of credit risk theory and financial modeling. A significant impulse was given by Herdegen and Herrmann \cite{Herdegen Herrmann}, who addressed the construction of processes with pre-specified compensators. Recent developments, such as the framework studied by A.A. Gushchin \cite{Gushchin}, demonstrate that practical applications require more general filtration structures where the "initial information" is non-trivial.

However, the structural complexity of these models often leads to non-standard behavior; for instance, a local martingale may fail to be a true martingale, as shown by Ruf \cite{Ruf}. Necessary and sufficient conditions for the preservation of the martingale property were established in \cite{Gushchin Zhunussova}. 

The primary objective of the present work is to provide a systematic characterization of the fundamental objects of stochastic analysis — including stopping times, progressively measurable processes, and (local) martingales — within a single-jump filtration framework that generalizes \cite{Gushchin} by incorporating a non-trivial initial $\sigma$-algebra $\cH$. To achieve this, we leverage the powerful tools of optional and predictable projections as treated in Dellacherie and Meyer \cite{Dellacherie Meyer} and Protter \cite{Protter}.

\subsection{Notations and Preliminaries}
Throughout this paper, we are considering a probability space $(\Omega,\cF, P)$ and a sub-$\sigma$-algebra $\cH\subset\cF$, representing the initial information available at $t=0$. 
Let $\gamma$ be a random variable (the jump time) with the distribution function $G(t)=P(\gamma<t)$. 
Throughout this paper, we operate under the standing assumption that $\PP(\gamma>0)=1$.
Following \cite{Gushchin}, we define the deterministic value $t_G=\sup\left\{t\in\RR_+\,:\, G(t)<1 \right\}$.  
We denote by $\RRl_+=[0,\infty]$ the extended half-line and by $\cB(\RRl_+)$ its Borel $\sigma$-algebra.

We define the system $(\cF_t)_{t\in \RRl_+}$ as follows:
\begin{equation}\label{def: filtration}
    A\in\cF_t\;\, \iff \;\, A\in\cF\quad \exists C(A)\in \cH\;:\; A\cap \{t<\gamma\}=C(A)\cap\{t<\gamma\}
\end{equation}
for $t<\infty$, and the limit $\sigma$-algebra is defined as:
\begin{equation}\label{def: filtration infty}
    \cF_{\infty}=\sigma \left(\bigcup\limits_{t\geq0}\cF_t\right)=\sigma \left(\bigcup\limits_{q\in\QQ_+}\cF_q\right).
\end{equation}
\begin{prop}
The family $(\cF_t)_{t\in\RR_+}$ defined by \eqref{def: filtration} is a filtration.
\end{prop}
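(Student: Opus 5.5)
We have to check two things: that each $\cF_t$ is a $\sigma$-algebra contained in $\cF$, and that $\cF_s\subseteq\cF_t$ whenever $s\le t$. Throughout, write $A\cap\{t<\gamma\}=C\cap\{t<\gamma\}$ with $C\in\cH$ as the defining relation \eqref{def: filtration}.

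\emph{Step 1: $\cF_t$ is a $\sigma$-algebra.} Inclusion in $\cF$ holds by definition.
\begin{itemize}
\item For $\Omega$ we take $C(\Omega)=\Omega\in\cH$.
\item \emph{Complements.} If $A\cap\{t<\gamma\}=C\cap\{t<\gamma\}$, then intersecting the complements with $\{t<\gamma\}$ gives
\[
A^c\cap\{t<\gamma\}=\{t<\gamma\}\setminus\bigl(A\cap\{t<\gamma\}\bigr)=\{t<\gamma\}\setminus\bigl(C\cap\{t<\gamma\}\bigr)=C^c\cap\{t<\gamma\}.
\]
So $C(A^c)=C^c$ works, and $C^c\in\cH$.
\item \emph{Countable unions.} If $A_n\cap\{t<\gamma\}=C_n\cap\{t<\gamma\}$ for each $n$, then distributivity gives
\[
\Bigl(\bigcup_n A_n\Bigr)\cap\{t<\gamma\}=\bigcup_n\bigl(A_n\cap\{t<\gamma\}\bigr)=\Bigl(\bigcup_n C_n\Bigr)\cap\{t<\gamma\}.
\]
So $C=\bigcup_n C_n\in\cH$ works. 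Choosing the $C_n$ only needs countable choice, which is harmless.
\end{itemize}

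\emph{Step 2: monotonicity.} Let $s\le t$ and $A\in\cF_s$, with $A\cap\{s<\gamma\}=C\cap\{s<\gamma\}$. Since $\{t<\gamma\}\subseteq\{s<\gamma\}$, intersecting both sides with $\{t<\gamma\}$ gives
\[
A\cap\{t<\gamma\}=C\cap\{t<\gamma\}.
\]
Hence $A\in\cF_t$ with the same $C(A)$.

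There is no real obstacle here. The only point needing care is the complement step: equality of the traces on $\{t<\gamma\}$ passes to complements only relative to $\{t<\gamma\}$, which the displayed computation handles.

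The filtration need not be right-continuous or complete, and the proposition does not claim either. Measurability of $\gamma$ is not needed, because only set identities are used. The statement concerns $t\in\RR_+$; $\cF_\infty$ is defined as a generated $\sigma$-algebra and contains every $\cF_t$ automatically, so the family stays increasing on $\RRl_+$.
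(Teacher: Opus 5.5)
Your proof is correct and is essentially the paper's argument: the same choices $C(A^c)=C(A)^c$ and $C\bigl(\bigcup_n A_n\bigr)=\bigcup_n C(A_n)$, and monotonicity by intersecting the defining identity with $\{t<\gamma\}\subseteq\{s<\gamma\}$. One small correction to your closing aside: right-continuity is not merely unclaimed here, it always holds (Lemma~\ref{lem: F is right-continuous}, taking $C=\limsup_n C_n$), so only completeness can genuinely fail before augmentation.
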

\begin{proof}
First, let us show that for each fixed $t\geq 0$, the collection $\cF_t$ is a $\sigma$-algebra on $\Omega$.
\begin{itemize}
    \item \textbf{Empty set and $\Omega$}: Since $\varnothing\in\cF$ and $\varnothing\cap\{t<\gamma\}=\varnothing\cap\{t<\gamma\}$, where $\varnothing\in\cH$, it follows that $\varnothing\in\cF_t$. Similarly, $\Omega\in\cF_t$ by taking $C(\Omega)=\Omega\in\cH$.
    \item \textbf{Complements}: Let $A\in\cF_t$. Then there exists $C(A)\in\cH$ such that $A\cap\{t<\gamma\}=C(A)\cap\{t<\gamma\}$. Consider the complement $A^C=\Omega\backslash A$. We need to find $C(A^C)\in\cH$. Let $C(A^C)=(C(A))^C=\Omega\backslash C(A)$.
    
    Observe that:
    \begin{align*}
        A^C\cap\{t<\gamma\}&=\{t<\gamma\}\backslash (A\cap\{t<\gamma\})=\\
        &=\{t<\gamma\}\backslash (C(A)\cap\{t<\gamma\})=\\
        &=(C(A))^C\cap\{t<\gamma\}
    \end{align*}
    Since $(C(A))^C\in\cH$, we conclude $A^C\in\cF_t$.
    \item \textbf{Countable Unions}: Let $\{A_n\}_{n\geq 1}$ be a sequence of sets in $\cF_t$. Then for each $n$, there exists $C(A_n)\in\cH$ such that $A_n\cap\{t<\gamma\}=C(A_n)\cap\{t<\gamma\}$. Let $A=\bigcup A_n$ and $C(A)=\bigcup C(A_n)$. Since $\cH$ is a $\sigma$-algebra, $C(A)\in\cH$.
    
    Then:
    \begin{align*}
        \left\{\bigcup\limits_{n=1}^\infty A_n\right\}\cap \{t<\gamma\}&=\bigcup\limits_{n=1}^\infty (A_n \cap \{t<\gamma\})=\\
        &= \bigcup\limits_{n=1}^\infty (C(A_n)\cap \{t<\gamma\})=\\
        &=\left\{\bigcup\limits_{n=1}^\infty C(A_n)\right\}\cap \{t<\gamma\}
    \end{align*}
    Thus, $\bigcup A_n\in \cF_t$.
\end{itemize}
Now, let us prove the monotonicity (the filtration property).

Let $s<t$ and $A\in\cF_s$. Then there exists $C(A)\in\cH$ such that $A\cap \{s<\gamma\}=C(A)\cap \{s<\gamma\}$.

Since $\{t<\gamma\}\subset \{s<\gamma\}$, we intersect both sides of the identity with $\{t<\gamma\}$:
$$
(A\cap\{s<\gamma\})\cap\{t<\gamma\}=(C(A)\cap \{s<\gamma\})\cap \{t<\gamma\}.
$$
This simplifies to $A\cap\{t<\gamma\}=C(A)\cap \{t<\gamma\}$, which implies $A\in\cF_t$.

Therefore, $\cF_s\subset \cF_t$, and $(\cF_t)_{t\geq0}$ is a filtration.
\end{proof}
\begin{lem}\label{lem: F is right-continuous}
    The filtration $\FF=(\cF_t)_{t\geq0}$ is right-continuous, i.e., $\cF_t=\bigcap\limits_{s>t}\cF_s$ for all $t\geq0$.
\end{lem}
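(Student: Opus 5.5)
The inclusion $\cF_t\subset\bigcap_{s>t}\cF_s$ follows at once from the monotonicity established in the preceding Proposition. The substance is the reverse inclusion. I will fix $t\ge 0$ and $A\in\bigcap_{s>t}\cF_s$, and construct a single set $C\in\cH$ with $A\cap\{t<\gamma\}=C\cap\{t<\gamma\}$.

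I will take a sequence $s_n\downarrow t$ with $s_n>t$, for instance $s_n=t+1/n$. Since $A\in\cF_{s_n}$ for every $n$, there are sets $C_n\in\cH$ with
\begin{equation*}
A\cap\{s_n<\gamma\}=C_n\cap\{s_n<\gamma\}.
\end{equation*}
The events $\{s_n<\gamma\}$ increase to $\{t<\gamma\}=\bigcup_n\{s_n<\gamma\}$. The candidate for $C(A)$ is $C=\liminf_n C_n=\bigcup_{m}\bigcap_{n\ge m}C_n$, which lies in $\cH$ because $\cH$ is a $\sigma$-algebra. (Using $\limsup_n C_n$ would work equally well.)

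The key step is to check $A\cap\{t<\gamma\}=C\cap\{t<\gamma\}$ pointwise. Fix $\omega$ with $\gamma(\omega)>t$. Then there is $m$ such that $\gamma(\omega)>s_n$ for all $n\ge m$, because $s_n\downarrow t$. For each such $n$ the identity above gives $\omega\in A\iff\omega\in C_n$. Hence, if $\omega\in A$, then $\omega\in C_n$ for all $n\ge m$, so $\omega\in C$. Conversely, if $\omega\in C$, then $\omega\in C_n$ for all $n$ beyond some index, and in particular for some $n\ge m$, which gives $\omega\in A$. This yields the required equality, and hence $A\in\cF_t$.

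I expect the main obstacle to be the choice of a single $\cH$-set, since the sets $C_n$ need not be consistent with one another. Taking the $\liminf$ resolves this, because on $\{t<\gamma\}$ the sequence $\ind_{C_n}(\omega)$ is eventually constant and equal to $\ind_A(\omega)$. No probabilistic assumptions are needed: the argument is purely set-theoretic and does not use $\PP(\gamma>0)=1$.
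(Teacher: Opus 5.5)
Your proof is correct and follows essentially the same route as the paper: take $s_n=t+1/n$, choose $C_n\in\cH$ matching $A$ on $\{s_n<\gamma\}$, and define $C$ as a tail set of the $C_n$, verified pointwise on $\{t<\gamma\}$ using that $\ind_{C_n}(\omega)$ eventually equals $\ind_A(\omega)$ there. The only differences are that the paper uses $\limsup_n C_n$ and phrases the check as a limit of indicator products, while you use $\liminf_n C_n$ with a direct elementwise argument; these are inessential.
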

\begin{proof}
    To establish the right-continuity of the filtration $\FF=(\cF_t)_{t\geq0}$, we observe that for any $t\geq0$, the inclusion $\cF_t\subseteq\bigcap\limits_{s>t}\cF_s$ holds by monotonicity. \\
    Conversely, let $A\in\bigcap\limits_{s>t}\cF_s$. For each $n\in\NN$, there exists $s_n=t+1/n$ and $C_n\in\cH$ such that $A\cap\{s_n<\gamma\}=C_n\cap \{s_n<\gamma\}$. 
    We define $C:=\limsup\limits_{n\to\infty}C_n\in\cH$, which is formally given by the intersection of the tail unions: $\bigcap\limits_{n=1}^\infty\bigcup\limits_{n=k}^\infty C_n$.
    Note that replacing the limit superior with a simple countable union $\bigcup\limits_{n=1}^{\infty} C_n$ is generally invalid here. Without the assumption that $(C_n)$ is an increasing sequence, the countable union would erroneously accumulate elements from $C_k$ on the sets $\{s_n<\gamma\le s_k\}$ for $k < n$. Since the condition $C_k\cap\{s_k<\gamma\}=A\cap\{s_k<\gamma\}$ imposes no restrictions on $C_k$ outsite the event $\{s_k<\gamma\}$, the set $C_k$ may contain elements not belonging to $A$ on these sets. The limit superior avoids this issue because $\limsup\limits_{n\to\infty}C_n$ depends only on the tail of sequence, thus eliminating these extraneous elements.
    To rigorously prove that $A\cap\{t<\gamma\}=C\cap\{t<\gamma\}$, we use the pointwise convergence of indicator functions. By assumption, for each n, we have: $$\ind_{C_n} \cdot\ind_{\{s_n<\gamma\}}=\ind_A\cdot\ind_{\{s_n<\gamma\}}.$$
    Since $s_n\downarrow t$, the expanding sequence of sets $\{s_n<\gamma\}$ monotonically converges to $\{t<\gamma\}$, ensuring that the sequence of indicators $\ind_{\{s_n < \gamma\}}$ has a regular pointwise limit. Taking the limit superior on both sides and using the product property for bounded sequences yields:
    $$\limsup\limits_{n\to\infty}\left( \ind_{C_n}\cdot\ind_{\{s_n<\gamma\}}\right)=\left(\limsup\limits_{n\to\infty}\ind_{C_n}\right)\cdot\lim\limits_{n\to\infty}\ind_{\{s_n<\gamma\}}.$$
    By definition, $\limsup\limits_{n\to\infty}\ind_{C_n}=\ind_C$. 
    Thus, the left-hand side reduces to $\ind_C\cdot\ind_{\{t<\gamma\}}$, while the right-hand side converges to $\ind_A\cdot\ind_{\{t<\gamma\}}$. 
    Equating the limits gives $C\cap\{t<\gamma\}=A\cap\{t<\gamma\}$, which completes the proof.
\end{proof}

\begin{rem}
    Henceforth, we assume that the filtration $(\cF_t)_{t\in\RR_+}$ is augmented by all $P$-null sets of $\cF$. Together with the results of Lemma \ref{lem: F is right-continuous}, this ensures that $\FF$ satisfies the \textbf{usual conditions} (completeness and right-continuity) in the sense of Dellacherie and Meyer \cite{Dellacherie Meyer}.
    This assumption is technical but crucial, as it guarantees the existence and regularity of optional and predictable projection (see \cite[Sec. II.5]{Protter}).
\end{rem}

\begin{defn}
    A filtration $\FF$ satisfying the usual conditions and constructed via \eqref{def: filtration} is called a \textbf{single jump filtration with initial information} $\cH$.
    If $\cH$ is the completion of the trivial $\sigma$-algebra $\{\varnothing, \Omega\}$, this structure reduces to the classical single jump filtration.
\end{defn}

\section{Structural Properties of the Filtration}

\subsection{Measurability and Adaptedness}
In this subsection, we characterize the measurability of random variables and the adaptedness of stochastic processes within the setting of the single jump filtration with initial information $\cH$.
\begin{prop}[Measurability criterion with respect to $\cF_t$]\label{prop: measurability critetion}
    A random variable $\xi$ is $\cF_t$-measurable if and only if there exists an $\cH$-measurable random variable $\eta$ such that
    $$
    \xi(\omega)=\eta(\omega)\quad \text{for all}\;\omega\in\{t<\gamma\}. 
    $$
\end{prop}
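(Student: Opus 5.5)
We prove the two implications separately. The sufficiency direction is a direct check on level sets. For necessity we use the standard monotone-class / simple-function approximation, as in the proof that a $\sigma(X)$-measurable variable factors through $X$. Here the "factorization" is replaced by the defining identity \eqref{def: filtration}. Throughout we work with real-valued (or extended real-valued) $\xi$; completeness of $\FF$ plays no role beyond allowing null-set modifications.

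\emph{Sufficiency.} Suppose $\eta$ is $\cH$-measurable and $\xi=\eta$ on $\{t<\gamma\}$. For a Borel set $B$ put $A=\{\xi\in B\}\in\cF$ and $C(A)=\{\eta\in B\}\in\cH$. Then
\[
A\cap\{t<\gamma\}=\{\eta\in B\}\cap\{t<\gamma\}=C(A)\cap\{t<\gamma\},
\]
so $A\in\cF_t$ by \eqref{def: filtration}. Hence $\xi$ is $\cF_t$-measurable.

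\emph{Necessity.} We build $\eta$ in three steps.

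\textbf{Step 1 (indicators).} If $\xi=\ind_A$ with $A\in\cF_t$, take $\eta=\ind_{C(A)}$. This is $\cH$-measurable and coincides with $\xi$ on $\{t<\gamma\}$.

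\textbf{Step 2 (simple functions).} By linearity, any simple $\cF_t$-measurable $\xi=\sum_i a_i\ind_{A_i}$ has the counterpart $\eta=\sum_i a_i\ind_{C(A_i)}$, which agrees with $\xi$ on $\{t<\gamma\}$.

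\textbf{Step 3 (general $\xi$).} Choose simple $\cF_t$-measurable $\xi_n\to\xi$ pointwise, with corresponding $\cH$-measurable $\eta_n$. Then $\eta_n(\omega)=\xi_n(\omega)\to\xi(\omega)$ for every $\omega\in\{t<\gamma\}$. Off this set, however, $\eta_n$ need not converge; this is the same issue the paper met in Lemma~\ref{lem: F is right-continuous}. We therefore define
\[
\eta:=\limsup_{n\to\infty}\eta_n\cdot\ind_{\{\limsup_n \eta_n<\infty,\ \liminf_n\eta_n>-\infty\}},
\]
or, more cleanly, $\eta:=\limsup_n\eta_n$ on the $\cH$-measurable set where $\lim_n\eta_n$ exists and is finite, and $\eta:=0$ elsewhere. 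This $\eta$ is $\cH$-measurable as a limit-type functional of $\cH$-measurable variables. On $\{t<\gamma\}$ the limit exists and equals $\xi$, so $\eta=\xi$ there.

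An alternative for Step 3 is a level-set construction: for each rational $q$ pick $C_q=C(\{\xi<q\})\in\cH$ and set $\eta(\omega)=\inf\{q:\omega\in C_q\}$. This, however, requires first making the family $(C_q)$ monotone, e.g.\ by replacing $C_q$ with $\bigcup_{r\le q}C_r$. That replacement is legitimate because all these sets agree with $\{\xi<r\}$ on $\{t<\gamma\}$, which is monotone in $r$.

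The main obstacle is exactly this passage to the limit in Step 3. The sets $C(A)$ are determined only on $\{t<\gamma\}$, so the approximants $\eta_n$ are uncontrolled off that event. The limit must be taken through a construction that is $\cH$-measurable and insensitive to that freedom, namely restricting to the set where the limit exists.
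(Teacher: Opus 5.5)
Your argument is correct, but for the necessity direction it takes a different route from the paper. The paper works with the level sets $\{\xi\le r\}$. It picks $C_r\in\cH$ for each $r$ and removes the arbitrariness of $C_r$ off $\{t<\gamma\}$ by replacing $\ind_{C_r}$ with the ratio $\EE(\ind_{\{\xi\le r\}}\ind_{\{t<\gamma\}}\mid\cH)/\EE(\ind_{\{t<\gamma\}}\mid\cH)$. It then regularizes this in $r$ over the rationals and invokes the existence of regular conditional distributions to obtain $\eta$. Every step holds only almost surely, so the paper's argument actually delivers $\xi=\eta$ only up to a null set.

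You instead run the Doob--Dynkin-type approximation (indicators, simple functions, pointwise limits). You handle the non-uniqueness of $C(A)$ not by making a canonical choice of sets, but by passing to the limit only on the $\cH$-measurable set where $(\eta_n)$ converges. This needs no probability measure, gives $\xi=\eta$ for every $\omega\in\{t<\gamma\}$ exactly as stated, and parallels the $\limsup$ device the paper itself uses for the $\cF_\infty$ criterion. What the paper's route buys is an explicit conditional-expectation formula for $\eta$ on $\{\PP(t<\gamma\mid\cH)>0\}$, the device reused in the proof of Theorem~\ref{thm: martingale criterion}. For integrable $\xi$, however, that formula follows from your existence result in one line, since $\EE(\xi\ind_{\{t<\gamma\}}\mid\cH)=\eta\,\PP(t<\gamma\mid\cH)$.

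Two minor points. In your level-set alternative the monotonization is not needed: for $\omega\in\{t<\gamma\}$ one already has $\omega\in C_q\iff\xi(\omega)<q$. So $\eta(\omega)=\inf\{q\in\QQ:\omega\in C_q\}$ works directly, and it is $\cH$-measurable because $\{\eta<s\}=\bigcup_{q<s}C_q$. Also, if you genuinely allow extended-real $\xi$, the ``finite limit'' version should use the set where the limit exists in $[-\infty,\infty]$; otherwise points with $\xi=\pm\infty$ receive $\eta=0$.
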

\begin{proof}
\textbf{$(\impliedby)$ Sufficiency}.

Suppose there exists an $\cH$-measurable random variable $\eta$ such that $\xi=\eta$ on the set $\{t<\gamma\}$. To prove that $\xi$ is $\cF_t$-measurable, we must show that for any Borel set $B\in\cB(\RR)$, the pre-image $\{\xi\in B\}$ belongs to $\cF_t$.

By the definition of the filtration $\cF_t$, this requires finding a set $C\in\cH$ such that:
$$
\{\xi\in B\}\cap\{t<\gamma\}=C\cap \{t<\gamma\}.
$$
Since $\xi(\omega)=\eta(\omega)$ for all $\omega\in\{t<\gamma\}$, the following identity holds:
$$
\{\omega: \xi(\omega)\in B\}\cap\{t<\gamma\}=\{\omega: \eta(\omega)\in B\}\cap \{t<\gamma\}.
$$
Let $C=\{\eta\in B\}$. Since $\eta$ is $\cH$-measurable, it follows that $C\in\cH$. Thus, the condition for $\cF_t$-measurability is satisfied for any Borel set $B$. This confirms that $\xi$ is $\cF_t$-measurable.

\textbf{($\implies$) Necessity.}
Fix $t\in\RR_+$. Suppose $\xi$ is $\cF_t$-measurable. By the definition of $\cF_t$, for any $r\in\RR$, the set $\{\xi\leq r\}$ belongs to $\cF_t$. Thus, there exists a set $C_r\in \cH$ such that:
\begin{equation}\label{proof: eq: measurable Ft with ind}
    \ind_{\{\xi\leq r\}}\cdot \ind_{\{t<\gamma\}}=\ind_{C_r}\cdot \ind_{\{t<\gamma\}}.
\end{equation}
Taking the conditional expectation with respect to $\cH$ on both sides of \eqref{proof: eq: measurable Ft with ind}, we obtain
$$
\EE\left(\ind_{\{\xi\leq r\}}\cdot \ind_{\{t<\gamma\}}|\cH\right)=\ind_{C_r}\cdot \EE\left(\ind_{\{t<\gamma\}}|\cH\right).
$$
On the set where $\EE(\ind_{\{t<\gamma\}}|\cH)>0$, we define the $\cH$-measurable function:
$$
\varphi(r,\omega)=\frac{\EE(\ind_{\{\xi\leq r\}}\cdot \ind_{\{t<\gamma\}}|\cH)}{\EE(\ind_{\{t<\gamma\}}|\cH)}.
$$
By construction, $\varphi(r,\omega)$ coincides with $\ind_{C_r}$ almost surely on $\{t<\gamma\}$. Since $\ind_{C_r}$ only takes values $0$ and $1$, the function $\varphi(r,\omega)$ is almost surely non-decreasing in $r$ and takes values in $[0,1]$.

To rigorously pass from this family of conditional expectations to a single random variable, we must ensure right-continuity and monotonicity simultaneously for all $r$ outside a single null set. We define the right-continuous modification by taking the infimum over the countable dense subset of rational numbers:
$$\widetilde{\varphi}(r, \omega) := \inf_{q>r, q\in\mathbb{Q}} \varphi(q, \omega).$$
By the properties of the infimum over a dense set, the map $r\mapsto\widetilde{\varphi}(r, \omega)$ is inherently right-continuous and non-decreasing for almost every $\omega$. 
According to classical results on regular conditional distributions (see, e.g., Shiryaev \cite[Chapter II, \S 7]{Shiryaev1996Probability}), this guarantees the existence of an $\cH$-measurable random variable $\eta$ such that $\ind_{\{\eta\le r\}}=\widetilde{\varphi}(r,\omega)$ almost surely.

Finally, we verify the identity:
\begin{align*}
    \ind_{\{\eta\leq r\}}\cdot \ind_{\{t<\gamma\}} &= \frac{\EE\left( \ind_{\{\xi\leq r\}} \cdot \ind_{\{t<\gamma\}}|\cH\right)}{\EE\left(  \ind_{\{t<\gamma\}}|\cH\right)} \cdot \ind_{\{t<\gamma\}}=\\
    &=  \frac{\EE\left( \ind_{C_r} \cdot \ind_{\{t<\gamma\}}|\cH\right)}{\EE\left(  \ind_{\{t<\gamma\}}|\cH\right)} \cdot \ind_{\{t<\gamma\}}=\\
    &= \ind_{C_r} \cdot  \frac{\EE\left( \ind_{\{t<\gamma\}}|\cH\right)}{\EE\left(  \ind_{\{t<\gamma\}}|\cH\right)} \cdot \ind_{\{t<\gamma\}}=\\
    &= \ind_{C_r}  \cdot \ind_{\{t<\gamma\}}=\\
    &= \ind_{\{\xi\leq r\}}  \cdot \ind_{\{t<\gamma\}}
\end{align*}
Since their indicators coincide for all $r$, it follows that $\xi=\eta$ on the set $\{t<\gamma\}$.
\end{proof}

\begin{cor}[Construction of $\cF_t$-measurable versions.]
    Let $\xi$ be an $\cF$-measurable random variable. Then the random variable $\eta$ define by
    $$
    \eta=\EE[\xi|\cH] \ind_{\{t<\gamma\}}+\xi \ind_{\{t\geq\gamma\}}
    $$
    is $\cF_t$-measurable.
\end{cor}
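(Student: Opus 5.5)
The plan is to apply the sufficiency direction of Proposition \ref{prop: measurability critetion}. For this we need an $\cH$-measurable random variable that agrees with $\eta$ on $\{t<\gamma\}$. The natural candidate is $\EE[\xi|\cH]$ itself. On $\{t<\gamma\}$ we have $\ind_{\{t<\gamma\}}=1$ and $\ind_{\{t\geq\gamma\}}=0$, so
$$
\eta(\omega)=\EE[\xi|\cH](\omega)\quad\text{for all }\omega\in\{t<\gamma\}.
$$
Proposition \ref{prop: measurability critetion} then yields that $\eta$ is $\cF_t$-measurable.

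The first issue is integrability. The conditional expectation $\EE[\xi|\cH]$ is only defined when $\xi$ is integrable, or at least quasi-integrable. I would state this implicitly assumed hypothesis explicitly, e.g. $\EE|\xi|<\infty$. Alternatively one could use generalized conditional expectations and allow extended values. In that case the measurability criterion carries over verbatim with $\cB(\RR)$ replaced by $\cB(\overline{\RR})$, since the sufficiency part of the criterion only uses preimages of Borel sets.

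The second issue is that $\EE[\xi|\cH]$ is defined only up to a $P$-null set. We fix one $\cH$-measurable version $\zeta$ and define $\eta$ with $\zeta$ in place of $\EE[\xi|\cH]$. Then $\eta=\zeta$ holds pointwise on $\{t<\gamma\}$, so the criterion applies exactly. Any other version differs from this $\eta$ only on a null set, and that set lies in $\cF_t$ because the filtration is augmented by all $P$-null sets. So the claim does not depend on the choice of version.

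As a sanity check one can also verify the definition directly. For $B\in\cB(\RR)$,
$$
\{\eta\in B\}\cap\{t<\gamma\}=\{\zeta\in B\}\cap\{t<\gamma\},
$$
with $C=\{\zeta\in B\}\in\cH$.

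The only real obstacle is the one above: making the integrability assumption and the choice of version precise. The rest is an immediate application of the sufficiency half of Proposition \ref{prop: measurability critetion}. Note also that the term $\xi\ind_{\{t\geq\gamma\}}$ plays no role, because it is killed on $\{t<\gamma\}$. So the same argument shows that any $\cF$-measurable choice on $\{t\geq\gamma\}$ yields an $\cF_t$-measurable variable.
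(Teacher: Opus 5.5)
Your proposal is correct and is essentially the paper's own argument. On $\{t<\gamma\}$ the variable $\eta$ coincides with the $\cH$-measurable $\EE[\xi|\cH]$, so the sufficiency half of Proposition~\ref{prop: measurability critetion} gives $\cF_t$-measurability; the paper invokes that proposition after checking the level sets $\{\eta<s\}\cap\{t<\gamma\}=\{\EE[\xi|\cH]<s\}\cap\{t<\gamma\}$. Your remarks on the implicit integrability of $\xi$ and on fixing a version are sensible refinements the paper omits, though the appeal to null-set augmentation is unnecessary: every version of $\EE[\xi|\cH]$ is itself $\cH$-measurable, so the criterion applies to it directly.
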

\begin{proof}
    For any $s$, consider the level set $\{\eta<s\}$. Its intersection with the set $\{t<\gamma\}$ satisfies: 
    $$\{\eta < s\} \cap \{t < \gamma\} = \{\EE(\xi|\cH ) < s\} \cap \{t < \gamma\}.$$
    Since $\EE[\xi|\cH]$ is $\cH$-measurable, it follows that $\{\EE[\xi|\cH]<s\}\in\cH$. By \textbf{Proposition} \ref {prop: measurability critetion}, this condition is sufficient to guarantee that $\eta$ is $\cF_t$-measurable. 
\end{proof}

\begin{lem}[Measurability criterion with respect to $\cF_\infty$]
    A random variable $\xi$ is $\cF_\infty$-measurable if and only if there exists an $\cH$-measurable random variable $\eta$ such that
    $$
    \xi(\omega)=\eta(\omega)\quad \text{for all}\;\omega\in\{\gamma=\infty\}. 
    $$
\end{lem}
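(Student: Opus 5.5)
The plan is to first identify $\cF_\infty$ explicitly and then rerun the argument of Proposition \ref{prop: measurability critetion} with the event $\{t<\gamma\}$ replaced by $\{\gamma=\infty\}$. Let
$$
\mathcal G=\bigl\{A\in\cF:\ \exists\, C\in\cH \text{ with } A\cap\{\gamma=\infty\}=C\cap\{\gamma=\infty\}\bigr\}.
$$
I will show that $\cF_\infty=\mathcal G$. Once this is established, the lemma is exactly the analogue of Proposition \ref{prop: measurability critetion}, with $\mathcal G$ in the role of $\cF_t$.

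\textbf{Step 1: $\cF_\infty\subseteq\mathcal G$.} The verification that $\mathcal G$ is a $\sigma$-algebra is word for word the one given for $\cF_t$: complements, countable unions, and $\varnothing,\Omega$. If $A\in\cF_t$ with witness $C$, then intersecting $A\cap\{t<\gamma\}=C\cap\{t<\gamma\}$ with $\{\gamma=\infty\}\subseteq\{t<\gamma\}$ gives $A\in\mathcal G$. Hence $\bigcup_t\cF_t\subseteq\mathcal G$, and by \eqref{def: filtration infty} we get $\cF_\infty\subseteq\mathcal G$.

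\textbf{Step 2: $\mathcal G\subseteq\cF_\infty$.} Take $A\in\mathcal G$ with witness $C$ and decompose
$$
A=\bigl(C\cap\{\gamma=\infty\}\bigr)\cup\bigcup_{n\in\NN}\bigl(A\cap\{\gamma\le n\}\bigr).
$$
Each piece lies in $\cF_\infty$:
\begin{itemize}
\item $A\cap\{\gamma\le n\}\in\cF_n$, with witness $\varnothing$;
\item $C\in\cF_0$, with witness $C$ itself;
\item $\{n<\gamma\}\in\cF_n$, with witness $\Omega$, so $\{\gamma=\infty\}=\bigcap_n\{n<\gamma\}\in\cF_\infty$.
\end{itemize}
If the augmentation by null sets is in force, $\mathcal G$ should be defined with $\cH$ completed. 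The same decomposition still works because null sets lie in $\cF_0$.

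\textbf{Step 3: sufficiency.} If $\xi=\eta$ on $\{\gamma=\infty\}$ with $\eta$ $\cH$-measurable, then
$$
\{\xi\in B\}\cap\{\gamma=\infty\}=\{\eta\in B\}\cap\{\gamma=\infty\}
$$
for every Borel set $B$. Hence $\{\xi\in B\}\in\mathcal G=\cF_\infty$.

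\textbf{Step 4: necessity.} This is the only step needing care. I plan to avoid conditional expectations (and the division by $\EE(\ind_{\{\gamma=\infty\}}|\cH)$, which may vanish) and build $\eta$ directly.
\begin{itemize}
\item For each $q\in\QQ$ choose $C_q\in\cH$ with $\{\xi\le q\}\cap\{\gamma=\infty\}=C_q\cap\{\gamma=\infty\}$.
\item Set $\eta'(\omega)=\inf\{q\in\QQ:\omega\in C_q\}$. It is $\cH$-measurable because $\{\eta'<r\}=\bigcup_{q<r,\,q\in\QQ}C_q\in\cH$.
\item For $\omega\in\{\gamma=\infty\}$ we have $\omega\in C_q$ if and only if $\xi(\omega)\le q$. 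Hence $\eta'(\omega)=\inf\{q\in\QQ:q\ge\xi(\omega)\}=\xi(\omega)$, which is finite.
\item Off $\{\gamma=\infty\}$, $\eta'$ may take the values $\pm\infty$. Put $\eta=\eta'\ind_{\{|\eta'|<\infty\}}$; this is still $\cH$-measurable and equals $\xi$ on $\{\gamma=\infty\}$.
\end{itemize}

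The main obstacle is Step 2, namely recognizing $\{\gamma=\infty\}$ and the pieces $A\cap\{\gamma\le n\}$ as elements of $\cF_\infty$. After that, everything is a direct transcription of the $\cF_t$ case.
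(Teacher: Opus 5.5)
Your proof is correct. Your sufficiency argument (Steps 2 and 3) is essentially the paper's. The paper decomposes $\xi=\xi\ind_{\{\gamma<\infty\}}+\eta\ind_{\{\gamma=\infty\}}$ and uses $B\cap\{\gamma\le n\}\in\cF_n$ to see that the trace of $\cF_\infty$ on $\{\gamma<\infty\}$ is the full trace of $\cF$. Your set-level decomposition $A=(C\cap\{\gamma=\infty\})\cup\bigcup_n(A\cap\{\gamma\le n\})$ is the same idea.

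The necessity direction takes a genuinely different route. The paper runs the functional monotone class theorem on bounded $\cF_\infty$-measurable variables, starting from indicators of the algebra $\bigcup_{q}\cF_q$, and then appeals to an unspecified truncation argument for unbounded $\xi$. You instead prove the structural identity $\cF_\infty=\mathcal G$ by the good-sets principle at the level of $\sigma$-algebras. This says the defining relation \eqref{def: filtration} persists at $t=\infty$ with $\{\gamma=\infty\}$ in place of $\{t<\gamma\}$. You then extract $\eta$ by the explicit Doob--Dynkin-type construction $\eta'=\inf\{q\in\QQ:\omega\in C_q\}$.

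What your route buys:
\begin{itemize}
\item Both directions follow from a single identity.
\item Unbounded real-valued $\xi$ are handled directly, with no truncation, no conditional expectations, and no monotone-limit bookkeeping. The paper leaves that bookkeeping implicit; for instance, its $\limsup\eta_n$ may be infinite off $\{\gamma=\infty\}$.
\item You verify explicitly that $\{\gamma=\infty\}\in\cF_\infty$, which the paper uses tacitly.
\item You correctly note that, under the null-set augmentation, $\cH$ must be replaced by its completion for the necessity statement to hold.
\end{itemize}

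The paper's monotone class argument is more generic: it needs only a generating algebra on which the property holds. For this filtration, however, your identification of $\cF_\infty$ is the sharper and cleaner result.
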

\begin{proof}
    \textbf{$(\impliedby)$ Sufficiency}.
    
    Suppose there exists an $\cH$-measurable random variable $\eta$ such that $\xi=\eta$ on $\{\gamma=\infty\}$. 
    To establish that the arbitrary $\cF$-measurable random variable $\xi$ is $\cF_\infty$-measurable, we first show that the trace of $\cF_\infty$ on $\{\gamma<\infty\}$ strictly equals the trace of $\cF$ on $\{\gamma<\infty\}$.

    Let $B\in\cF$. For any $n\in\NN$, the intersection $B\cap\{\gamma<\infty\}$ belongs to $\cF_n$, and consequently to the limit $\sigma$-algebra $\cF_\infty\supset\cF_n$.
    Expressing the trace over the finite jump domain as a countable union yields:
    $$
    B\cap \{\gamma<\infty\}=\bigcup\limits_{n=1}^\infty (B\cap\{\gamma\leq n\})\in\cF_\infty.
    $$
    This ensures that the restriction $\xi\ind_{\{\gamma<\infty\}}$ is $\cF_\infty$-measurable.

    On the complementary set $\{\gamma=\infty\}$, the variable $\xi$ coincides with $\eta$ by hypothesis.
    Since $\cH\subset\cF_\infty$, the restriction $\eta\ind_{\{\gamma=\infty\}}$ is inherently $\cF_\infty$-measurable.
    Thus, the decomposition $\xi=\xi\ind_{\{\gamma<\infty\}}+\eta\ind_{\{\gamma=\infty\}}$ guarantees the strict $\cF_\infty$-measurability of the random variable $\xi$.
    
    \textbf{($\implies$) Necessity.}
    
    Let $\cM$ be the collection of all bounded $\cF_{\infty}$-measurable random variables $\xi$ such that there exists an $\cH$-measurable $\eta$ with $\xi=\eta$ on $\{\gamma=\infty\}$. If $0\leq \xi_n\uparrow\xi$ where $\xi_n\in\cM$ and $\xi$ is bounded, then for each $n$ there is an $\cH$-measurable $\eta_n$ such that $\xi_n=\eta_n$ on $\{\gamma=\infty\}$. The limit $\eta=\limsup\eta_n$ is $\cH$-measurable and $\xi=\eta$ on $\{\gamma=\infty\}$, so $\xi\in\cM$.
    Let $\cA=\bigcup\limits_{q\in\QQ_+}\cF_q$. For any indicator $\ind_A$ with $A\in\cA$, Proposition \ref{prop: measurability critetion} guarantees the existence of $\ind_H$ ($H\in\cH$) coinciding with it on $\{q<\gamma\}\supset\{\gamma=\infty\}$. Thus, $\cA\in\cM$.
    Since $\cM$ is a monotone class containing the constant $1$ and the indicators of the generating algebra $\cA$, the Functional Monotone Class Theorem implies that $\cM$ contains all bounded $\cF_{\infty}$-measurable functions. 
    Extension to the unbounded case follows by a standard truncation argument.
\end{proof}

\begin{prop}[Criterion for Adaptedness]\label{prop: adapted process}
    A stochastic process $X = (X_t)_{t\in\RR_+}$ is a process adapted to the filtration $\FF=(\cF_t)_{t\in\RR_+}$ if and only if for each $t\geq0$, the random variable $X_t$ coincides with some $\cH$-measurable random variable $F(t)$ on the set $\{t<\gamma\}$.
\end{prop}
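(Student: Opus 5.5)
This statement reduces pointwise in $t$ to Proposition \ref{prop: measurability critetion}. By definition, $X$ is adapted to $\FF$ exactly when $X_t$ is $\cF_t$-measurable for every $t\in\RR_+$. So the whole proof amounts to applying the measurability criterion separately at each fixed time $t$ and assembling the resulting family $\{F(t)\}_{t\geq0}$.

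\textbf{Necessity.} Suppose $X$ is adapted and fix $t\geq0$. Then $X_t$ is $\cF_t$-measurable, so Proposition \ref{prop: measurability critetion} gives an $\cH$-measurable random variable $\eta_t$ with $X_t=\eta_t$ on $\{t<\gamma\}$. We set $F(t):=\eta_t$. Since $t$ is arbitrary, this produces, for every $t$, an $\cH$-measurable $F(t)$ coinciding with $X_t$ on $\{t<\gamma\}$.

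Two points deserve a remark.
\begin{itemize}
\item The statement asks only for some $\cH$-measurable $F(t)$ for each $t$. It does not require any joint measurability of $(t,\omega)\mapsto F(t,\omega)$, so no measurable selection in $t$ is needed.
\item Because the filtration is augmented by null sets, the coincidence provided by Proposition \ref{prop: measurability critetion} may only hold almost surely. I would interpret ``coincides'' up to a $P$-null set, consistent with the usual conditions, or else redefine $\eta_t$ on a null set. This is harmless once $\cH$ is taken to contain the $P$-null sets, which I would assume by completing $\cH$ as in the definition of the filtration.
\end{itemize}

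\textbf{Sufficiency.} Suppose that for each $t$ there is an $\cH$-measurable $F(t)$ with $X_t=F(t)$ on $\{t<\gamma\}$. For any Borel set $B$,
$$\{X_t\in B\}\cap\{t<\gamma\}=\{F(t)\in B\}\cap\{t<\gamma\},$$
and $\{F(t)\in B\}\in\cH$. Hence $\{X_t\in B\}\in\cF_t$ by the definition \eqref{def: filtration}; equivalently, this is the sufficiency half of Proposition \ref{prop: measurability critetion}. Therefore every $X_t$ is $\cF_t$-measurable, and $X$ is adapted.

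\textbf{Main obstacle.} The only delicate step is the null-set bookkeeping in the necessity direction: we must make sure the exceptional null set does not spoil $\cH$-measurability of $F(t)$. I would handle it by working with the completed $\cH$, so that modifying $\eta_t$ on a null set keeps it $\cH$-measurable. Everything else is a direct invocation of Proposition \ref{prop: measurability critetion}.
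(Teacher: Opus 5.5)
Your proof is correct and is essentially the paper's argument: both directions come from applying Proposition~\ref{prop: measurability critetion} separately at each fixed $t$. You use its necessity half for adaptedness $\Rightarrow$ representation, and its sufficiency half (which you unfold directly from \eqref{def: filtration}) for the converse. Your null-set remark on the augmented filtration is a reasonable refinement that the paper does not spell out, and it does not change the argument.
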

\begin{proof}
    If $X$ is an adapted process, then for each $t\geq0$, the random variable $X_t$ is $\cF_t$-measurable. By the \textbf{measurability criterion} (\textbf{Proposition} \ref{prop: measurability critetion}), there exists an $\cH$-measurable random variable $F(t)$ such that $X_t=F(t)$ on the set $\{t<\gamma\}$. The converse follows directly from the sufficiency part of \textbf{Proposition} \ref{prop: measurability critetion} applied to each $X_t$.
\end{proof}

 \subsection{Characterization of Stopping Times}
\begin{prop}[Stopping Time Criterion]\label{prop: stopping time critetion} 
    A random variable $T$ is a stopping time with respect to $\FF=(\cF_t)_{t\in\RR_+}$ if and only if the following property holds: if the set $\{T<\gamma\}$ is non-empty, then there exists an $\cH$-measurable stopping time $S$ such that
  \begin{equation}\label{eq: T}
        \{T<\gamma\}=\{T=S<\gamma\}=\{S<\gamma\}.
    \end{equation}
\end{prop}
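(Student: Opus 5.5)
I would prove both directions by reducing the stopping-time condition $\{T\le t\}\in\cF_t$ to the measurability criterion, Proposition \ref{prop: measurability critetion}. Here an ``$\cH$-measurable stopping time'' means an $\cH$-measurable random variable $S$ with values in $\RRl_+$. Such an $S$ is automatically an $\FF$-stopping time, since $\{S\le t\}\in\cH\subset\cF_t$.

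For sufficiency, assume \eqref{eq: T} holds for some $\cH$-measurable $S$ (if $\{T<\gamma\}$ is empty, take $S\equiv\infty$). I would fix $t$ and verify
$$\{T\le t\}\cap\{t<\gamma\}=\{S\le t\}\cap\{t<\gamma\}.$$
On $\{t<\gamma\}$ the relation $T\le t$ forces $T<\gamma$, hence $T=S$ by \eqref{eq: T}, so $S\le t$. Conversely, $S\le t<\gamma$ gives $S<\gamma$, hence $T=S\le t$. Since $\{S\le t\}\in\cH$, the definition \eqref{def: filtration} yields $\{T\le t\}\in\cF_t$.

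For necessity, let $T$ be a stopping time. For each rational $q$, the set $\{T\le q\}$ lies in $\cF_q$, so there is $C_q\in\cH$ with $\{T\le q\}\cap\{q<\gamma\}=C_q\cap\{q<\gamma\}$. I would replace $C_q$ by $D_q:=\bigcap_{r\ge q,\,r\in\QQ}C_r$ to make the family increasing in $q$. This must be checked: for $r\ge q$, on $\{r<\gamma\}\subset\{q<\gamma\}$ we have $C_q=\{T\le q\}\subset\{T\le r\}=C_r$. Hence intersecting over $r\ge q$ does not change the trace on $\{q<\gamma\}$, and $D_q$ is still a valid choice. Then I define
$$S:=\inf\{q\in\QQ_+ : \omega\in D_q\},\qquad \inf\varnothing=\infty.$$
This $S$ is $\cH$-measurable because $\{S<t\}=\bigcup_{q<t}D_q$.

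It remains to check \eqref{eq: T}, namely that $T=S$ on $\{T<\gamma\}\cup\{S<\gamma\}$. Take $\omega$ with $T(\omega)<\gamma(\omega)$. For rationals $q\in(T,\gamma)$ we get $\omega\in D_q$, so $S\le T$. For rationals $q<T$ with $q<\gamma$ we get $\omega\notin C_q\supset D_q$, so $S\ge T$. Hence $S=T$ on $\{T<\gamma\}$. Symmetrically, if $S<\gamma$, pick rationals $q\in(S,\gamma)$ with $\omega\in D_q$, which gives $T\le q$; letting $q\downarrow S$ gives $T\le S<\gamma$, so $T<\gamma$ and the first case applies.

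The main obstacle is this boundary bookkeeping when rationals approach $T$ or $S$ from the appropriate side. It needs the increasing modification $D_q$: otherwise $C_q$ for small $q$ could contain $\omega$ spuriously on $\{\gamma\le q\}$, which is the same issue flagged in the proof of Lemma \ref{lem: F is right-continuous}. Right-continuity of $\FF$ is what justifies using strict inequalities and rational approximations. Since $\FF$ is completed, the identities may hold only up to null sets; I would handle this by modifying $S$ on an $\cH$-null set, which is allowed because $\cH$ is assumed complete.
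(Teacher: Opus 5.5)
Your sufficiency direction is correct and is the same as the paper's. The necessity direction has a gap at the monotonization step: the claim that $D_q:=\bigcap_{r\ge q,\,r\in\QQ}C_r$ has the same trace on $\{q<\gamma\}$ as $C_q$ is false. What you verified is $C_q\cap\{r<\gamma\}\subset C_r$ for $r\ge q$, i.e.\ the inclusion only on the \emph{smaller} set $\{r<\gamma\}$. On $\{q<\gamma\le r\}$ the set $C_r$ is completely unconstrained, so intersecting with it can delete points of $\{T\le q\}\cap\{q<\gamma\}$.

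Here is a concrete counterexample. Take $\cH=\sigma(\{\gamma\le 1\})$ with $0<P(\gamma\le 1)<1$, and $T\equiv 0$. The choices $C_q=\Omega$ for $q<1$ and $C_q=\{\gamma>1\}$ for $q\ge 1$ are legitimate. But then $D_q=\{\gamma>1\}$ for every $q$, so your $S$ equals $+\infty$ on $\{0<\gamma\le 1\}$, and \eqref{eq: T} fails on a set of positive probability. The step that breaks is ``for rationals $q\in(T,\gamma)$ we get $\omega\in D_q$'', i.e.\ $S\le T$ on $\{T<\gamma\}$. You cited the spurious-elements issue from Lemma~\ref{lem: F is right-continuous}, but intersecting over \emph{later} times brings in exactly that arbitrariness of $C_r$ outside $\{r<\gamma\}$.

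The repair is easy. One option is to monotonize in the other direction, $D_q:=\bigcup_{r\le q,\,r\in\QQ_+}C_r$. This is valid because for $r\le q$ we have $\{q<\gamma\}\subset\{r<\gamma\}$, hence $C_r\cap\{q<\gamma\}=\{T\le r\}\cap\{q<\gamma\}\subset\{T\le q\}\cap\{q<\gamma\}$.

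Alternatively, skip monotonization and put $S:=\inf\{q\in\QQ_+:\omega\in C_q\}$, which is $\cH$-measurable since $\{S<t\}=\bigcup_{q<t}C_q$. If $T<\gamma$, then $\omega\in C_q$ for rational $q\in[T,\gamma)$ and $\omega\notin C_q$ for rational $q<T$, so $S=T$. If $S<\gamma$, some rational $q<\gamma$ has $\omega\in C_q$, whence $T\le q<\gamma$.

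With this fix, your explicit construction is more concrete than the paper's appeal to regular conditional distributions. There the $\cH$-measurable $S$ is obtained separately for each $t$, and how a single $S$ arises from the union over $\QQ_+$ is not really explained.

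Two minor points. Right-continuity of $\FF$ plays no role in your argument, since you only use $\{T\le q\}\in\cF_q$. Also, the exceptional set coming from the augmentation is a countable union of $\cF$-null sets. Modifying $S$ on it keeps $S$ $\cH$-measurable only if $\cH$ contains all subsets of $P$-null sets.
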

\begin{proof}
\textbf{($\implies$) Necessity.}
Let $T$ be a stopping time with respect to the filtration $\FF=(\cF_t)_{t\geq0}$, meaning $\{T\leq t\}\in\cF_t$ for all $t\geq0$. 
Since $\FF$ is a filtration, for any $s\leq t$, we have $\{T\leq s\}\in\cF_t$.

By the measurability criterion established in \textbf{Proposition} \ref{prop: measurability critetion}, for each $s\leq t$, there exists an $\cH$-measurable set $C_{s,t}$  such that:
$$
\{T\leq s\}\cap \{t<\gamma\}=C_{s,t}\cap \{t<\gamma\},
$$

Following the construction of the regular conditional distribution detailed in the proof of Proposition \ref{prop: measurability critetion}, there exists an $\cH$-measurable random variable $S$ such that $T=S$ on the set $\{T\le t<\gamma\}$. While $T$ and $S$ coincide on the event where $T$ is bounded by $t$ within $\{t<\gamma\}$, these functions may differ on the set $\{T>t\}\cap\{t<\gamma\}$.

Specifically, for any $t\geq0$, we have the identity:
$$
\{T\leq t<\gamma\}=\{T=S\leq t<\gamma\}=\{S\leq t<\gamma\}.
$$
Since this holds for all $t\geq0$, we can consider a countable dense subset of $\RR_+$ (e.g., the rational numbers $\QQ_+$). Taking the union over all $q\in\QQ_+$, we derive the identity \ref{eq: T}.

\textbf{$(\impliedby)$ Sufficiency}.
Conversely, suppose $T$ is a random variable for which there exists an $\cH$-measurable stopping time $S$ satisfying \eqref{eq: T} implies that
$$
\{T\leq t<\gamma\}\subset \{T<\gamma\}\subset \{T=S\}\;\; and \;\;\{S\leq t<\gamma\}\subset \{S<\gamma\}\subset \{T=S\}.
$$
Then, for any $t\geq0$, we obtain:
\begin{align*}
    \{T\leq t\}\cap \{t<\gamma\}&= \{T=S\}\cap \{T\leq t<\gamma\}=\\
    &= \{T=S\}\cap \{S\leq t< \gamma\}=\\
    &= \{S\leq t<\gamma\} = \{S\leq t\}\cap \{t<\gamma\}.
\end{align*}
Since $S$ is an $\cH$-measurable stopping time, the set $\{S\leq t\}$ belongs to $\cH$.
Assuming the non-trivial case $\{T<\gamma\}\neq \varnothing$, the established identity and the definition of the filtration directly imply that $\{T\leq t\}\in \cF_t$ for all $t\geq0$.
(The complementary case $\{T<\gamma\}=\varnothing$ holds trivially, as discussed in Remark \ref{rem: [T<g]=0}).
\end{proof}
\begin{rem}\label{rem: [T<g]=0}
    If the assumption that $\{T<\gamma\}$ is nonempty relaxed, the case $\{T<\gamma\}=\varnothing$ is trivial, as it immediately implies $\{T\leq t\}\cap \{t<\gamma\}=\varnothing\in\cF_t$.
\end{rem}

\begin{lem}[Stopping Time Criterion]\label{lem: stop time criterion}
    Let $(T_n)_{n\geq1}$ be an increasing sequence of stopping times with respect to $\FF$ such that $T_n\to\infty$ almost surely as $n\to\infty$. 
    Assume that $P(\gamma<\infty)=1$ (i.e., $t_G<\infty)$ and $P(T_n<\gamma)>0$ for all $n\geq1$.
    Then there exists an increasing sequence of stopping times $(R_n)_{n\geq1}$ with respect to $\FF$ such that:
    \begin{itemize}
        \item $R_n\leq t_G$ almost surely for all $n\geq1$;
        \item $R_n\to t_G$ almost surely as $n\to\infty$;
        \item For each $n$, the identity holds on the set before the jump:
        \begin{equation}\label{eq: T_n = R_n <gamma}
            \{T_n<\gamma\}=\{T_n=R_n<\gamma\}=\{R_n<\gamma\}.
        \end{equation}
    \end{itemize}
\end{lem}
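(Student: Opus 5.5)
We build $R_n$ by applying the Stopping Time Criterion (Proposition \ref{prop: stopping time critetion}) to each $T_n$, then truncating and making the sequence monotone. Since $P(T_n<\gamma)>0$, the set $\{T_n<\gamma\}$ is non-empty. Proposition \ref{prop: stopping time critetion} therefore gives an $\cH$-measurable random variable $S_n$ with
\begin{equation*}
\{T_n<\gamma\}=\{T_n=S_n<\gamma\}=\{S_n<\gamma\}.
\end{equation*}
Because the filtration is complete, $S_n$ may be modified on null sets freely. We then set
\begin{equation*}
R_n:=\Bigl(\max_{k\le n} S_k\Bigr)\wedge t_G .
\end{equation*}
This sequence is increasing and bounded by $t_G$. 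Each $R_n$ is $\cH$-measurable, and $\cH\subset\cF_0$ (take $C(A)=A$ in \eqref{def: filtration}). Hence $\{R_n\le t\}\in\cH\subset\cF_t$, so $R_n$ is an $\FF$-stopping time.

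\textbf{Identity \eqref{eq: T_n = R_n <gamma}.} Since $\gamma\le t_G$ a.s. (as $P(\gamma<\infty)=1$ and the definition of $t_G$), truncation at $t_G$ changes nothing on $\{S_n<\gamma\}$: there $S_n<\gamma\le t_G$. For the maximum, we use the monotonicity $T_k\le T_n$ for $k\le n$. On $\{T_n<\gamma\}$ we have $T_k\le T_n<\gamma$, so $S_k=T_k\le T_n=S_n$, and thus $\max_{k\le n}S_k=S_n=T_n$. Conversely, on $\{R_n<\gamma\}$ we get $S_n\le R_n<\gamma$, hence $T_n=S_n<\gamma$, and then again $R_n=S_n$. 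These two inclusions give \eqref{eq: T_n = R_n <gamma}. Null sets are handled by the completeness assumption.

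\textbf{Convergence $R_n\to t_G$ (main obstacle).} I expect this step to be the hardest, because $S_n$ is only pinned down on $\{T_n<\gamma\}$ and its behavior elsewhere is unconstrained. The idea is to exploit $\cH$-measurability. The events $B_n:=\{S_n<\gamma\}$ should be related to an $\cH$-event $C_n$ with $B_n=C_n\cap\{\cdot<\gamma\}$-type structure. Since $T_n\to\infty$ and $\gamma<\infty$ a.s., we have $P(T_n<\gamma)\to 0$.

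Fix $s<t_G$; then $P(\gamma>s)>0$. I would argue via conditional expectations given $\cH$, as in Proposition \ref{prop: measurability critetion}, that $\{R_n\le s\}$ is an $\cH$-set on which eventually $T_n>s$ must fail to hold on a set of positive conditional probability of $\{\gamma>s\}$. More precisely, on $\{R_n\le s\}\cap\{s<\gamma\}$ we get $T_n=S_n\le s$, so
\begin{equation*}
P(\{R_n\le s\}\cap\{s<\gamma\})\le P(T_n\le s)\to0.
\end{equation*}
If the conditional probability $P(s<\gamma\mid\cH)$ is a.s. positive, this forces $P(R_n\le s)\to0$. Monotonicity of $R_n$ then upgrades this to almost-sure convergence to $t_G$, letting $s\uparrow t_G$ along rationals.

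Where $P(s<\gamma\mid\cH)=0$, the values of $S_n$ are irrelevant to the identity on $\{s<\gamma\}$, up to null sets. There I would redefine $S_n$ to be $t_G$, which is harmless for \eqref{eq: T_n = R_n <gamma}. Concretely, set $S_n:=t_G$ on the $\cH$-set $\{P(S_n<\gamma\mid\cH)=0\}$. Carrying out this redefinition consistently, before taking the maximum, is the delicate point of the argument.
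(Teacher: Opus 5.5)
\textbf{There is a genuine gap, and it cannot be repaired within your approach.} Your construction fails at the step you flagged, namely the convergence $R_n\to t_G$. The problem is structural: once $R_n$ is required to be $\cH$-measurable, no choice satisfying the identity need converge to $t_G$.

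Your estimate $P(\{R_n\le s\}\cap\{s<\gamma\})\to0$ only controls $R_n$ where $P(s<\gamma\mid\cH)>0$. So at best it yields $\lim_n R_n\ge\tau:=\sup\{s\in\QQ_+ : P(\gamma>s\mid\cH)>0\}$, the $\cH$-conditional essential supremum of $\gamma$. This can lie strictly below $t_G$ on an event of positive probability, and the example below shows that no redefinition of $S_n$ helps.

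Let $\cH=\sigma(\eta)$ with $P(\eta=1)=P(\eta=2)=\tfrac12$, and let $\gamma$ be uniform on $(0,\eta)$ given $\eta$, so $t_G=2$. Put $S_n:=\eta\,(1-\tfrac1{n+1})$, with $T_n:=S_n$ on $\{S_n<\gamma\}$ and $T_n:=+\infty$ elsewhere.
\begin{itemize}
\item Since $\{T_n\le t\}\cap\{t<\gamma\}=\{S_n\le t\}\cap\{t<\gamma\}$ with $\{S_n\le t\}\in\cH$, every $T_n$ is a stopping time.
\item The $T_n$ increase, $T_n\to\infty$ a.s., and $P(T_n<\gamma)=\tfrac1{n+1}>0$, so all hypotheses of the lemma hold.
\item Any $\cH$-measurable $R_n$ is a.s.\ equal to a constant $r$ on $\{\eta=1\}$. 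There $\gamma$ is uniform on $(0,1)$, so the a.s.\ equality $\{\gamma>r\}=\{\gamma>1-\tfrac1{n+1}\}$ forces $r=1-\tfrac1{n+1}$.
\end{itemize}
Hence every $\cH$-measurable choice satisfying the identity has $R_n\to1\neq t_G$ on an event of probability $\tfrac12$. Your redefinition on $\{P(S_n<\gamma\mid\cH)=0\}$ never triggers here, because that set is null. Your hypothesis ``$P(s<\gamma\mid\cH)>0$ a.s.'' also fails for every $s\in[1,2)$.

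\textbf{The missing idea} is that $R_n$ does not have to be $\cH$-measurable. The definition of $\cF_t$ only constrains the trace of an event on $\{t<\gamma\}$, so you may send $R_n$ to $t_G$ after the jump. The paper takes
\[
R_n:=T_n\ind_{\{T_n<\gamma\}}+t_G\ind_{\{T_n\ge\gamma\}}.
\]
\begin{itemize}
\item For $t<t_G$, $\{R_n\le t\}\cap\{t<\gamma\}=\{T_n\le t\}\cap\{t<\gamma\}$ up to a null set, because $\gamma\le t_G$ a.s.
\item For $t\ge t_G$ the event $\{t<\gamma\}$ is null. Together with the previous point, $R_n$ is a stopping time.
\item The identity and $R_n\le t_G$ are immediate, and monotonicity follows from that of $T_n$.
\item $\{R_n=t_G\}=\{T_n\ge\gamma\}$ increases to an event of full probability because $T_n\to\infty$ and $\gamma<\infty$ a.s.
\end{itemize}
Proposition \ref{prop: stopping time critetion} is not needed at all.
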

\begin{proof}
    \textbf{Step 1: Construction of the sequence $(R_n)_{n\geq1}$}
    
    We define the sequence of random variables $(R_n)_{n\geq1}$ by the following formula:
    $$
    R_n:=T_n\ind_{\{T_n<\gamma\}}+t_G\ind_{\{T_n\geq \gamma\}}=
    \left\{
        \begin{array}{ll}
        T_n, & \hbox{if\,\, $T_n<\gamma$;} \\
        t_G, & \hbox{if\,\, $T_n\geq\gamma$.} 
        \end{array}
    \right.
    $$
    By definition, $R_n\leq t_G$ almost surely for all $n\geq1$. Moveover, on the set $\{T_n<\gamma\}$, the identity $R_n=T_n$ holds, which directly implies the jump-coincidence identity \eqref{eq: T_n = R_n <gamma}:
    $$
    \{T_n<\gamma\}=\{T_n=R_n<\gamma\}=\{R_n<\gamma\}.
    $$
    \textbf{Step 2: Verification of the stopping time property}
    
    To show that $R_n$ is a stopping time with respect to $\FF$, we must verify that $\{R_n\leq t\}\in\cF_t$ for any $t\geq0$.
    We perform a two-case analysis.

    For $t<t_G$, we analyze the intersection $\{R_n\leq t\}\cap\{t<\gamma\}$. 
    The condition $R_n\leq t<\gamma\leq t_G$ strictly forces $R_n<t_G$, which by definition implies $R_n=T_n<\gamma$.
    Consequently, on this intersection, the random variables strictly coincide, yielding the identity:
    $$
    \{R_n\leq t\}\cap \{t<\gamma\}=\{T_n\leq t\}\cap \{t<\gamma\}.
    $$
    Since $T_n$ is an $\cF_t$-stopping time, $\{T_n\leq t\}\in\cF_t$, ensuring that $\{R_n\leq t\}\in\cF_t$ for $t<t_G$.

    For $t\geq t_G$, the event $\{t<\gamma\}$ is contained in $\{t_G<\gamma\}$, which is a $\PP$-null set since $\PP(\gamma\leq t_G)=1$.
    Because the filtration $\FF$ is augmented with all $\PP$-null sets, the intersection inherently belongs to $\cF_t$,confirming that $R_n$ is a valid stopping time.
    
    \textbf{Step 3: Convergence to $t_G$}
    
    Since the sequence of stopping times $(T_n)_{n\geq1}$ is monotonically increasing and $T_n\to\infty$ almost surely, the condition $T_n\geq \gamma$ inherently guarantees $T_{n+1}\geq\gamma$. 
    Consequently, the sequence of events $A_n=\{R_n=t_G\}=\{T_n\geq\gamma\}$ is monotonically increasing (i.e. $A_n\subset A_{n+1}$). 
    This monotonicity explicitly justifies the application of the continuity of measure from below:
    \begin{align*}
        P(\underset{n\to\infty}{\lim} R_n=t_G)&=P\left(\bigcup_{n=1}^\infty \{R_n=t_G\}\right)=\\
        &=\underset{n\to\infty}{\lim} P(R_n=t_G)= \underset{n\to\infty}{\lim} P(T_n\geq \gamma )=\\
        &=P(\gamma<\infty)=1.
    \end{align*}
    Thus, $R_n \uparrow t_G$ almost surely as $n\to\infty$.
\end{proof}
\begin{rem}\label{rem: stop time criterion}
    The introduction of the bounded stopping times $R_n\leq t_G$ in Lemma \ref{lem: stop time criterion} is a crucial methodological step. 
    In the general theory of stochastic processes, bounding the time horizon is often the necessary prerequisite for applying Lebesgue's Dominated Convergence Theorem.
    Specifically, restricting a process with regular trajectories (such as càdlàg paths) to a compact time interval $[0,t_G]$ guarantees the existence of an integrable majorant. 
    This ultimately allows for the legal interchange of the limit and the expectation $\lim\limits_{n\to\infty} \EE[X_{t\wedge R_n}]=\EE[\lim\limits_{n\to\infty} X_{t\wedge R_n}]$, preventing the "loss of mass" phenomenon in subsequent martingale proofs.
\end{rem}

\subsection{Predictable and Progressively Measurable Processes}
In this section, we characterize the class of processes that are compatible with the information flow $\FF$ \eqref{def: filtration}; the measurability of a process is largely determined by its behavior prior to the jump $\gamma$.

Throughout this study, we assume that the filtration $\FF$ satisfies the \textbf{usual conditions} (i.e., it is complete and right-continuous). This assumption ensures the equivalence between the concepts of progressive measurability and \textbf{optionality} within the general theory of processes (see Theorem 4 in Section II.5 of \cite{Protter} or VI.43 in \cite{Dellacherie Meyer}).
\begin{prop}[Progressive Measurability Criterion]\label{prop: progressive process}
    A stochastic  process $X$ is progressively measurable with respect to the filtration $\FF$ if and only if there exists a $\cB(\RR_+)\otimes \cH$-measurable function $C(t,\omega)$ such that $X_t=C(t,\omega)$ on the set $\{t<\gamma\}$.
\end{prop}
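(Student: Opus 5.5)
I will prove the two implications separately. Sufficiency is a direct computation; necessity needs a monotone class argument.

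\textbf{Sufficiency.} Suppose $C$ is $\cB(\RR_+)\otimes\cH$-measurable and $X_t=C(t,\omega)$ on $\{t<\gamma\}$. Fix $t$ and restrict to $[0,t]\times\Omega$. I decompose
$$X_s=C(s,\omega)\ind_{\{s<\gamma\}}+X_s\ind_{\{s\geq\gamma\}},\qquad s\le t.$$
\begin{itemize}
\item The process $(s,\omega)\mapsto C(s,\omega)$ is $\cB([0,t])\otimes\cH$-measurable. Since $\cH\subset\cF_0\subset\cF_t$, it is also $\cB([0,t])\otimes\cF_t$-measurable.
\item The indicator $\ind_{\{s<\gamma\}}$ is the indicator of the stochastic interval $[\![0,\gamma[\![$. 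It is progressive because $\gamma$ is a stopping time: $\{\gamma\le s\}\cap\{s<\gamma\}=\varnothing$, which is Proposition \ref{prop: stopping time critetion} with trivial $S$.
\item On $\{s\ge\gamma\}$ with $s\le t$ we have $\gamma\le t$. The trace of $\cF_t$ on $\{\gamma\le t\}$ is the full trace of $\cF$, exactly as in the $\cF_\infty$ lemma. So I need $X\ind_{\{\gamma\le s\}}$ to be measurable on $[0,t]\times\Omega$.
\end{itemize}
The third point is where an implicit assumption is needed. I must assume $X$ itself is $\cB(\RR_+)\otimes\cF$-measurable, which is implicit in calling $X$ a process. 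Under that assumption, $X\ind_{[\![\gamma,\infty[\![}$ restricted to $[0,t]$ is $\cB([0,t])\otimes\cF$-measurable and vanishes off $[0,t]\times\{\gamma\le t\}$. Sets $B\times F$ with $F\subset\{\gamma\le t\}$ and $F\in\cF$ belong to $\cB([0,t])\otimes\cF_t$, so a monotone class argument gives the required measurability.

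\textbf{Necessity.} Let $X$ be progressive. By the usual conditions, $X$ is optional (as recalled before the statement). I will run a monotone class argument on the optional $\sigma$-algebra. It is generated by the stochastic intervals $[\![T,\infty[\![$ for stopping times $T$, or equivalently by càdlàg adapted processes.

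Let $\mathcal{K}$ be the class of bounded optional $X$ for which some $\cB(\RR_+)\otimes\cH$-measurable $C$ satisfies $X=C$ on $[\![0,\gamma[\![$.
\begin{itemize}
\item $\mathcal{K}$ is a vector space containing the constants.
\item $\mathcal{K}$ is closed under bounded monotone limits: if $X^n\in\mathcal{K}$ with representatives $C_n$, then $C:=\limsup C_n$ is jointly measurable and represents the limit, as in the $\cF_\infty$ lemma.
\end{itemize}

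For the generators, let $X=\ind_{[\![T,\infty[\![}$. Proposition \ref{prop: stopping time critetion} gives an $\cH$-measurable $S$ with $\{T<\gamma\}=\{S<\gamma\}$ and $T=S$ there. I claim that $C(t,\omega)=\ind_{\{S(\omega)\le t\}}$ works.
\begin{itemize}
\item On $\{t<\gamma\}$, if $T\le t$ then $T<\gamma$, so $S=T\le t$.
\item Conversely, if $S\le t<\gamma$ then $S<\gamma$, so $T=S\le t$.
\end{itemize}
This $C$ is jointly measurable because $\{(t,\omega):S(\omega)\le t\}\in\cB(\RR_+)\otimes\cH$.

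The main obstacle is that progressive measurability only coincides with optionality up to evanescent sets. I expect to absorb this by working modulo evanescence, using completeness of $\cH$, or by interpreting the identity $X_t=C(t,\cdot)$ on $\{t<\gamma\}$ almost surely for each $t$. The extension from bounded to general $X$ is by truncation with $\arctan$.
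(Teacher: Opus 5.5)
\textbf{Gap in the first step of necessity.} The problem is your first step: ``by the usual conditions, $X$ is optional''. This is false in general, and it stays false modulo evanescent sets. So your plan to absorb the discrepancy by working modulo evanescence cannot work. The usual conditions only give $\cO(\FF)\subset\mathrm{Prog}(\FF)$.

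A standard counterexample: take a Brownian motion $B$ with its augmented natural filtration, and let $G$ be the set of left endpoints of the excursion intervals of $B$ away from $0$.
\begin{itemize}
\item $G$ is progressive. For $s<t$, whether $s\in G$ is decided by the path on $[0,t]$, and $\{t\in G\}\subset\{B_t=0\}$ is null.
\item $G$ is not evanescent.
\item $\llbracket T\rrbracket\cap G$ is evanescent for every stopping time $T$, by the strong Markov property: after any stopping time at which $B$ vanishes, $B$ returns to $0$ immediately.
\end{itemize}
By the optional section theorem, $G$ is therefore not indistinguishable from any optional set.

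Your monotone class argument over the generators $\ind_{\llbracket T,\infty\llbracket}$ is correct. It is in fact what substantiates the paper's bare assertion that $\cO(\FF)$ and $\cB(\RR_+)\otimes\cH$ have the same trace on $\llbracket 0,\gamma\llbracket$. But it establishes necessity only for optional $X$. The paper's proof makes exactly the same reduction, so following it does not close the gap. In this particular filtration, progressive and optional processes do coincide up to evanescence, but that is a by-product of the argument below, not something you may use as an input.

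\textbf{How to fix it.} Skip $\cO(\FF)$ and apply your trace idea directly to the progressive $\sigma$-field. Fix $t$ and let $E_t=\{t<\gamma\}$.
\begin{enumerate}
\item By \eqref{def: filtration}, the traces of $\cF_t$ and $\cH$ on $E_t$ coincide. After augmentation, replace $\cH$ by its completion; this costs only an evanescent set at the end.
\item The trace of a product $\sigma$-field on a product set is the product of the traces. So $X$ restricted to $[0,t]\times E_t$ is measurable for the trace of $\cB([0,t])\otimes\cH$. Hence $X=C_t$ on $[0,t]\times E_t$ for some $\cB([0,t])\otimes\cH$-measurable $C_t$.
\item Paste along dyadics. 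With $r_n(s)=2^{-n}\lceil 2^n s\rceil$, put $C(s,\omega)=\limsup_{n}C_{r_n(s)}(s,\omega)$, which is $\cB(\RR_+)\otimes\cH$-measurable.
\item If $s<\gamma(\omega)$, then $r_n(s)\downarrow s$ forces $r_n(s)<\gamma(\omega)$ for all large $n$. So $C(s,\omega)=X_s(\omega)$.
\end{enumerate}
This needs neither the stopping time criterion nor a monotone class over $\cO(\FF)$.

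\textbf{Sufficiency.} Your sufficiency part is fine, and it is more general than the paper's, which tacitly assumes $X_s=L$ for $s\ge\gamma$. You use that $\cF_t$ and $\cF$ have the same trace on $\{\gamma\le t\}$. You also rightly flag that joint $\cB(\RR_+)\otimes\cF$-measurability of $X$ is needed for that direction to hold at all.
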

\begin{proof}
    \textbf{($\implies$) Necessity.}
    Assume $X=(X_t)_{t\geq0}$ to be an $\FF$-progressively measurable process.
    As noted above, under the usual conditions, $X$ is also optional, i.e., measurable with respect to the optional $\sigma$-algebra $\cO(\FF)$.
    
    We establish the existence of the jointly measurable representation $C(t,\omega)$ strictly via the trace properties of $\cO(\FF)$. Given the specific structure of the single jump filtration $\FF$ defined in \eqref{def: filtration}, the trace of $\cF_t$ on $\{t<\gamma\}$ exactly coincides with the trace of the initial $\sigma$-algebra $\cH$ on $\{t<\gamma\}$ for all $t\geq0$. 
    
    Consequently, by the standard properties of generated $\sigma$-algebras (specifically, the identity $\sigma(\cE) \cap \Gamma = \sigma(\cE \cap \Gamma)$ for traces) and the structure of optional processes, the trace of $\cO(\FF)$ on the stochastic interval $\llbracket0,\gamma\llbracket$ identically coincides with the trace of the product $\sigma$-algebra $\cB(\RR_+)\otimes\cH$.

    Since the process $X$ is $\cO(\FF)$-measurable, this coincidence of traces implies the existence of a strictly jointly $\cB(\RR_+)\otimes\cH$-measurable process $C(t,\omega)$ such that $X=C$ on the set $\llbracket 0,\gamma\llbracket$.
    
    \textbf{$(\impliedby)$ Sufficiency}.

    Suppose such a $\cB(\RR_+)\otimes\cH$-measurable function $C(t,\omega)$ exists. 
    We must verify the joint $\cB([0,t])\otimes\cF_t$-measurability of the map $(s,\omega)\to X_s(\omega)$ on the product space $[0,t]\times \Omega$ for any fixed $t\geq 0$.

    The domain $[0,t]\times\Omega$ partitions into two measurable sets: $A=\{(s,\omega): s<\gamma(\omega)\}$ and $B=\{(s,\omega): s\geq\gamma(\omega)\}$.

    On the pre-jump set $A$, we have $X_s(\omega)=C(s,\omega)$ by hypothesis.
    Since the function $C$ is strictly $\cB(\RR_+)\otimes\cH$-measurable and $\cH\subset\cF_t$, its restriction to $A$ is inherently jointly measurable.

    On the post-jump set $B$, the process evaluates to its terminal state: $X_s(\omega)=L(\omega)$ for all $s\geq\gamma(\omega)$.
    Here, the restriction of the process $X$ coincides exactly with the composition of the $\cF_t$-measurable random variable $L$ (since $\gamma\leq s \leq t$) with the measurable section map $(s,\omega)\to\omega$.
    This canonical composition guarantees the required joint $\cB([0,t])\otimes \cF_t$-measurability on $B$.

    Since its restrictions to both $A$ and $B$ are jointly measurable, the stochastic process $X$ is progressively measurable.
\end{proof}
\begin{rem}
    The characterization of progressive measurability in Proposition \ref{prop: progressive process} naturally dictates the structure of stopped processes within the filtration $\FF$. 
    Specifically, if a progressively measurable process $M$ is stopped at the jump time $\gamma$, its pre-jump trajectory is entirely governed by the $\cB(\RR_+)\otimes \cH$-measurable function $F(t):= C(t,\omega)$ on the set $\{t<\gamma\}$, while its terminal state is captured by an $\cF_{\gamma}$-measurable random variable $L:=M_\gamma$ on the set $\{t\geq \gamma\}$. 
    This structural dichotomy rigorously justifies the canonical representation $M_t=F(t)\ind_{\{t<\gamma\}}+L\ind_{\{t\geq \gamma\}}$, which serves as the foundational object for the martingale analysis in the subsequent sections.
\end{rem}

\begin{prop}[Predictable criterion]
    A stochastic process $Y=(Y_t)_{t\geq 0}$ is predictable if and only if there exists a $\cB(\RR_+)\otimes\cH$-measurable function $C(t,\omega)$ such that for all $t\geq0$, $Y_t=C(t)$ on the set $\{t\leq \gamma\}$.
\end{prop}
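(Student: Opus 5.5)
The plan is to compare the trace of the predictable $\sigma$-algebra $\cP(\FF)$ with the trace of $\cB(\RR_+)\otimes\cH$ on the closed stochastic interval $\llbracket0,\gamma\rrbracket$, and to treat the post-jump region separately. The argument parallels the proof of Proposition \ref{prop: progressive process}, but uses the explicit generators of $\cP(\FF)$. Throughout, a ``stochastic process'' is understood as a $\cB(\RR_+)\otimes\cF$-measurable map. This standing joint measurability is exactly what the sufficiency direction needs, since the statement says nothing about $Y$ after $\gamma$. Equalities are understood up to evanescent sets, which is harmless because $\FF$ is complete.

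\textbf{Necessity.} Recall that $\cP(\FF)$ is generated by the sets $\{0\}\times A$ with $A\in\cF_0$, and by the sets $(s,t]\times A$ with $s<t$ and $A\in\cF_s$. Let $\cM$ be the class of bounded predictable $Y$ for which some $\cB(\RR_+)\otimes\cH$-measurable $C$ satisfies $Y_u=C(u)$ on $\{u\le\gamma\}$ for every $u$. I will check the generators and then closure under limits.
\begin{itemize}
\item[(i)] Take $A\in\cF_s$. By \eqref{def: filtration} there is $C_A\in\cH$ with $A\cap\{s<\gamma\}=C_A\cap\{s<\gamma\}$. If $u\in(s,t]$ and $u\le\gamma$, then $s<\gamma$. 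Hence $\ind_{(s,t]}(u)\ind_A=\ind_{(s,t]}(u)\ind_{C_A}$ on $\{u\le\gamma\}$, and the right-hand side is $\cB(\RR_+)\otimes\cH$-measurable.
\item[(ii)] Take $A\in\cF_0$. Since $\PP(\gamma>0)=1$, we get $A=C_A$ on $\{0<\gamma\}$ up to a null set. The generator $\{0\}\times A$ is therefore handled in the same way as in (i).
\item[(iii)] $\cM$ is a vector space containing constants. If $0\le Y^n\uparrow Y$ with representations $C_n$, then $C:=\limsup_n C_n$ is $\cB(\RR_+)\otimes\cH$-measurable and represents $Y$ on $\llbracket0,\gamma\rrbracket$. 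This is the same $\limsup$ device used in Lemma \ref{lem: F is right-continuous}.
\end{itemize}
The functional monotone class theorem then gives all bounded predictable processes, and truncation extends the result to the unbounded case.

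\textbf{Sufficiency.} Write $Y=C\,\ind_{\llbracket0,\gamma\rrbracket}+Y\,\ind_{\rrbracket\gamma,\infty\llbracket}$. The set $\llbracket0,\gamma\rrbracket$ is predictable, because $\gamma$ is a stopping time: every $\cF$-set inside $\{\gamma\le t\}$ lies in $\cF_t$. A process $C$ that is $\cB(\RR_+)\otimes\cH$-measurable is predictable, because $\cH\subset\cF_0$ and $\cB(\RR_+)\otimes\cF_0\subset\cP(\FF)$. It follows that the first term is predictable.

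The key step, which I expect to be the main obstacle, is to show that the trace of $\cP(\FF)$ on $\rrbracket\gamma,\infty\llbracket$ coincides with the trace of $\cB(\RR_+)\otimes\cF$. Once that is established, the jointly measurable process $Y\ind_{\rrbracket\gamma,\infty\llbracket}$ is automatically predictable. The argument goes as follows.
\begin{itemize}
\item[(a)] For any $A\in\cF$ and $r\ge0$, the set $A\cap\{\gamma\le r\}$ belongs to $\cF_r$, since its intersection with $\{r<\gamma\}$ is $\varnothing\in\cH$. Consequently, $(r,t]\times(A\cap\{\gamma\le r\})\in\cP(\FF)$.
\item[(b)] Taking the union over rational $r$ gives
$$\bigcup_{r\in\QQ_+}(r,t]\times(A\cap\{\gamma\le r\})=\{(u,\omega):\gamma(\omega)<u\le t,\ \omega\in A\}.$$
This set is predictable, because $\gamma<u$ holds exactly when $\gamma\le r<u$ for some rational $r$.
\item[(c)] The sets in (b) generate the trace of $\cB(\RR_+)\otimes\cF$ on $\rrbracket\gamma,\infty\llbracket$, by the trace identity $\sigma(\cE)\cap\Gamma=\sigma(\cE\cap\Gamma)$.
\end{itemize}
Combining the two terms, $Y$ is predictable.
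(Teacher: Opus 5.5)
Your proof is correct, and it takes a different route from the paper's at two points.

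\textbf{Necessity.} The paper runs the monotone class argument over left-continuous adapted processes. It simply asserts that such a process agrees on $\llbracket0,\gamma\rrbracket$ with a process that is left-continuous in $t$ and $\cH$-measurable in $\omega$, and it does not construct this version. You instead check the elementary predictable rectangles $(s,t]\times A$ and $\{0\}\times A$. There the representation follows in one line from the definition of $\cF_s$, because $s<u\le\gamma$ forces $s<\gamma$. You also make explicit two things the paper uses tacitly: time $0$ needs $\PP(\gamma>0)=1$, and the augmentation only allows equality up to evanescent sets.

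\textbf{Sufficiency.} The paper stops after showing that $C\,\ind_{\llbracket0,\gamma\rrbracket}$ is predictable. That proves predictability of $Y\ind_{\llbracket0,\gamma\rrbracket}$, but not of $Y$ itself on $\rrbracket\gamma,\infty\llbracket$. Your steps (a)--(c) supply exactly this missing piece. Every $\cF$-set contained in $\{\gamma\le r\}$ already lies in $\cF_r$. Hence the trace of $\cP(\FF)$ on $\rrbracket\gamma,\infty\llbracket$ is the whole trace of $\cB(\RR_+)\otimes\cF$, and the post-jump part of any jointly measurable process is automatically predictable. This also shows that your convention that $Y$ is $\cB(\RR_+)\otimes\cF$-measurable is not cosmetic. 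The hypothesis of the proposition constrains $Y$ only on $\llbracket0,\gamma\rrbracket$, so joint measurability is the only thing controlling $Y$ after the jump, and your trace computation shows it is enough.

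In short, the paper's choice of generators is conceptually natural but leaves a version construction and the whole post-jump region unproved. Your rectangle-based argument is elementary and complete.
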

\begin{proof}
    \textbf{($\implies$) Necessity.} By definition, the predictable $\sigma$-algebra $\cP$ is generated by the class of all left-continuous adapted processes.
    
    Let $Z$ be such a left-continuous adapted process. For any fixed $t$, the random variable $Z_t$ coincides with some $\cH$-measurable variable on the set $\{t\leq \gamma\}$. 
    
    Because $Z$ has left-continuous paths, its restriction to the predictable stochastic interval $[0,\gamma]$ is indistinguishable from a process $C(t,\omega)$ that is a left-continuous in $t$ and $\cH$-measurable in $\omega$. Any such process is inherently $\cB(\RR_+)\otimes \cH$-measurable.
    
    By the Functional Monotone Class Theorem, this structural measurability property extends from left-continuous processes to all $\cP$-measurable (predictable) processes. Thus, for any predictable process $Y$, the required $\cB(\RR_+)\otimes \cH$-measurable process $C(t,\omega)$ exists.
    
    \textbf{$(\impliedby)$ Sufficiency}. Assume such a $\cB(\RR_+)\otimes \cH$-measurable process $C(t,\omega)$ exists. Since $\cH\subset\cF_0$, the process $C$ is predictable.
    
    The stochastic interval $[0, \gamma] = \{(t,\omega) : t \le \gamma(\omega)\}$ is predictably measurable because its indicator process $\ind_{\{t\leq \gamma\}}$ is left-continuous and adapted.
    
    The trajectory of $Y$ prior to and including the jump can be written as $Y_t\ind_{\{t\leq \gamma\}}=C(t,\omega)\ind_{\{t\leq \gamma\}}$. Being the product of two predictable processes, it is predictable itself.
\end{proof}

\begin{ex}[A Predictable Process]
    Consider a financial model where the initial information $\cH$ contains a client's credit rating, modeled by an $\cH$-measurable random variable $\eta$. The discounted value process before default (the jump $\gamma$) can be defined as $Y_t=\eta e^{-rt}\ind_{\{t<\gamma\}}$. Here, the function $C(t,\omega)=\eta(\omega) e^{-rt}$ is explicitly $\cB(\RR_+)\otimes\cH$-measurable and left-continuous, making $Y$ a predictable process.
\end{ex}

\begin{ex}[A Progressively Measurable, but Not Predictable Process]
    Consider the single jump process itself, $X_t=\ind_{\{\gamma\leq t\}}$. Since the filtration is right-continuous and adapted, this right-continuous process is progressively measurable. However, assuming the jump time $\gamma$ is totally inaccessible (e.g., its distribution function $G$ is continuous), $X$ is not predictable, because its jump cannot be foretold by the strict past $\cF_{t-}$.
\end{ex}
\section{Criterion Martingale.}
Let us establish the conditions under which a process is a martingale.

\begin{thm}[Martingale Criterion]\label{thm: martingale criterion}
    Let $\cT=\{t\in\RR_+:\PP(\gamma\geq t)>0\}$. 
    Let $F=(F_t)_{t\in\cT}$ be an $\cH$-measurable process with càdlàg paths, and let $L$ be an arbitrary random variable.
    
    Consider the process $M=(M_t)_{t\in\RR_+}$ defined by the equality:
    \begin{equation}\label{eq: M_t=F(t)1[t<g]+L1[t>=g]}
       M_t =F(t)\ind_{\{t<\gamma\} }+L\ind_{\{t\geq \gamma\}}. 
    \end{equation}   
    Then the following statements are equivalent:
    \begin{enumerate}
        \item \label{thm: martingale criterion: martingale}
        The process $(M_t)_{t\in\cT}$ is a martingale.
        
        \item \label{thm: martingale criterion: 2 eq}
        For all $t\in\cT$, the integrability condition  \begin{equation}\label{eq: E(|Mt|)<8}
            \EE(|M_t|)<\infty
        \end{equation}
        holds, and
        \begin{equation}\label{eq: E(Mt-M0|H)=0}
            \EE(M_t-M_0|\cH)=0\quad\text{a.s.}
        \end{equation}
    \end{enumerate}
\end{thm}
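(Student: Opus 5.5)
The plan is to prove both implications directly from the definition \eqref{def: filtration} of $\cF_s$, using the representation \eqref{eq: M_t=F(t)1[t<g]+L1[t>=g]} of $M$. First I record some preliminary facts.

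Since $\PP(\gamma>0)=1$, we have $M_0=F(0)$ a.s. This variable is $\cH$-measurable, so condition \eqref{eq: E(Mt-M0|H)=0} is equivalent to $\EE(M_t\mid\cH)=F(0)$ for all $t\in\cT$.

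Next, $M$ is adapted. Indeed, $M_t$ is $\cF$-measurable, because $L$ is a random variable, $F(t)$ is $\cH$-measurable and $\gamma$ is $\cF$-measurable. Moreover, $M_t$ coincides with the $\cH$-measurable variable $F(t)$ on $\{t<\gamma\}$. The sufficiency part of Proposition \ref{prop: measurability critetion} (equivalently Proposition \ref{prop: adapted process}) then gives that $M_t$ is $\cF_t$-measurable.

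Finally, $\cH\subset\cF_0$: for $C\in\cH$, take $C(C)=C$ in \eqref{def: filtration}.

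\textbf{(\ref{thm: martingale criterion: martingale}) $\Rightarrow$ (\ref{thm: martingale criterion: 2 eq}).} Integrability \eqref{eq: E(|Mt|)<8} is part of the definition of a martingale. Since $\cH\subset\cF_0$, the tower property gives, for $t\in\cT$,
$$
\EE(M_t\mid\cH)=\EE\bigl(\EE(M_t\mid\cF_0)\mid\cH\bigr)=\EE(M_0\mid\cH)=M_0 .
$$
The last equality uses that $M_0=F(0)$ is $\cH$-measurable. This is exactly \eqref{eq: E(Mt-M0|H)=0}.

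\textbf{(\ref{thm: martingale criterion: 2 eq}) $\Rightarrow$ (\ref{thm: martingale criterion: martingale}).} Adaptedness and integrability are already available, so it remains to show that $\EE\bigl((M_t-M_s)\ind_A\bigr)=0$ for $s<t$ in $\cT$ and $A\in\cF_s$. I split $A$ into $A\cap\{\gamma\le s\}$ and $A\cap\{s<\gamma\}$.

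On $\{\gamma\le s\}$ we also have $\gamma\le t$, so $M_t=M_s=L$ there, and the first piece contributes zero. The key pathwise observation is that this in fact gives the identity $(M_t-M_s)\ind_{\{s<\gamma\}}=M_t-M_s$.

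For the second piece, the definition \eqref{def: filtration} provides $C\in\cH$ with $A\cap\{s<\gamma\}=C\cap\{s<\gamma\}$. Combining this with the identity above yields
$$
\EE\bigl((M_t-M_s)\ind_{A\cap\{s<\gamma\}}\bigr)=\EE\bigl((M_t-M_s)\ind_C\bigr)=\EE\bigl(\ind_C\,[\EE(M_t\mid\cH)-\EE(M_s\mid\cH)]\bigr).
$$
By \eqref{eq: E(Mt-M0|H)=0}, applied at both $t$ and $s$, each of the two conditional expectations equals $M_0$, so the right-hand side is $0$. All expectations here are finite by \eqref{eq: E(|Mt|)<8}.

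The main obstacle is precisely this reduction. A general $A\in\cF_s$ is not $\cH$-measurable, so the conditional expectation given $\cH$ cannot be applied to it directly. The difficulty is resolved by two facts used together: the increment $M_t-M_s$ vanishes after the jump, and $A$ agrees with an $\cH$-set before the jump. This combination lets us replace $\ind_A$ by $\ind_C$ with $C\in\cH$. The càdlàg assumption on $F$ is not needed for this argument; it only ensures that the martingale obtained has a regular version.
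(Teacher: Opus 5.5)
Your proof is correct, and your necessity half is the same as the paper's. In the sufficiency half you use the same two facts as the paper. First, $M_t-M_s$ vanishes on $\{\gamma\le s\}$, so that $(M_t-M_s)\ind_{\{s<\gamma\}}=M_t-M_s$. Second, sets of $\cF_s$ agree with sets of $\cH$ on $\{s<\gamma\}$. You exploit these facts differently.

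The paper works with the random variable $\EE(M_t-M_s\mid\cF_s)$. It calls on the necessity half of Proposition~\ref{prop: measurability critetion} to write this variable on $\{s<\gamma\}$ as an $\cH$-measurable $\eta$. It then takes $\EE(\cdot\mid\cH)$ and divides by $\PP(s<\gamma\mid\cH)$, which needs the extra observation that this conditional probability is a.s.\ positive on $\{s<\gamma\}$.

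You instead test against $A\in\cF_s$ and use only the definition \eqref{def: filtration} to replace $A\cap\{s<\gamma\}$ by $C\cap\{s<\gamma\}$ with $C\in\cH$. This bypasses both the representation lemma and the division. The paper's proof of that lemma relies on a rather delicate regular-conditional-distribution construction. Your route also covers points $s\in\cT$ with $\PP(s<\gamma)=0$ without any special comment.

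What the paper's route gives in return is an explicit pre-jump formula (with $0/0:=0$): $\EE(X\mid\cF_s)\ind_{\{s<\gamma\}}=\ind_{\{s<\gamma\}}\EE(X\ind_{\{s<\gamma\}}\mid\cH)/\PP(s<\gamma\mid\cH)$. This is the right tool when one wants to compute conditional expectations rather than merely show they vanish.

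One small detail is missing on your side. Since $\FF$ is augmented by null sets, a general $A\in\cF_s$ differs by a null set from a set satisfying \eqref{def: filtration}. This leaves $\EE\bigl((M_t-M_s)\ind_A\bigr)$ unchanged.
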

\begin{proof}
    \textbf{($\implies$) Necessity.}
    If $(M_t)_{t\in\cT}$ is a martingale, the integrability condition \eqref{eq: E(|Mt|)<8} is 
    trivially satisfied.
    
    Since $\cH\subset\cF_0$, the tower property of conditional expectation \eqref{eq: E(Mt-M0|H)=0} yields: 
    $$
    \EE(M_t-M_0|\cH)=\EE\Big(\EE(M_t-M_0|\cF_0)\Big|\cH\Big)=\EE(0|\cH)=0.
    $$
    \textbf{$(\impliedby)$ Sufficiency}.
    Conversely, assume statement \ref{thm: martingale criterion: 2 eq} holds. 
    By construction, $M_t$ is right-continuous, adapted (due to the adaptedness criterion in Proposition \ref{prop: adapted process}), and integrable \eqref{eq: E(|Mt|)<8}.
    
    It remains to prove the martingale property: $\EE[M_t-M_s|\cF_s]=0$ a.s. for any $s,t\in\cT$ such that $s\leq t$.
    
    On the set $\{s\geq \gamma\}$, we have $M_t=M_s=L$. Therefore, $M_t-M_s=0$ a.s., which implies $\EE[M_t-M_s|\cF_s]=0$ a.s. on $\{s\geq \gamma\}$.
    
    On the set $\{s<\gamma\}$, the random variable $\EE[M_t-M_s|\cF_s]$ is $\cF_s$-measurable. 
    By the measurability criterion with respect to $\cF_s$ (Proposition \ref{prop: measurability critetion}), there exists an $\cH$-measurable variable $\eta$ such that:
    $$
    \EE(M_t-M_s|\cF_s)\ind_{\{s<\gamma\}}=\eta\ind_{\{s<\gamma\}}.
    $$
    Taking the conditional expectation with respect to $\cH$ and multiplying both sides by $\ind_{\{s<\gamma\}}$, we obtain:
    $$
    \EE\left[(M_t-M_s)\ind_{\{s<\gamma\}}\mid\cH\right] \ind_{\{s<\gamma\}}=\eta\PP(s<\gamma|\cH)\ind_{\{s<\gamma\}}.
    $$
    On the set $\{s<\gamma\}$, the conditional probability $P(s<\gamma\mid\cH)$ is strictly positive a.s., which rigorously justifies dividing by it.
    This yields the identity on $\{s<\gamma\}$:
    $$
    \eta \ind_{\{s<\gamma\}}=\frac{\EE\big((M_t-M_s)\ind_{\{s<\gamma\}}\big|\cH\big)}{P(s<\gamma|\cH)} \ind_{\{s<\gamma\}}.
    $$
    Crucially, since $M_t-M_s=0$ on the complementary set $\{s\geq \gamma\}$, the random variable inside the expectation simplifies: $(M_t-M_s)\ind_{\{s<\gamma\}}=M_t-M_s$.
    Therefore, we can rewrite the numerator:
    $$
    \eta\ind_{\{s<\gamma\}}=\frac{\EE[M_t-M_0\mid\cH]-\EE[M_s-M_0\mid\cH]}{P(s<\gamma|\cH)}\ind_{\{s<\gamma\}}.
    $$
    By the assumption in statement \ref{thm: martingale criterion: 2 eq}, both expectations in the numerator are exactly zero. Thus, $\eta\ind_{\{s<\gamma\}}=0$.
    
    Consequently, $\EE[M_t-M_s|\cF_s]\ind_{\{s<\gamma\}}=0$, concluding the proof that $M$ is a martingale on $\cT$.
\end{proof}

\begin{ex} (Compensated Default Process).
    Consider a model with an $\cH$-measurable random variable $\Lambda>0$, which represents the jump intensity.
    Suppose that, conditional on $\cH$, the jump time $\gamma$ follows an exponential distribution with parameter $\Lambda$, meaning $P(\gamma>t\mid \cH)=e^{-\Lambda t}$.
    
    We define the standard compensated jump process $M_t=\ind_{\{\gamma\leq t\}}-\Lambda(t\wedge \gamma)$.
    Let us verify that $M$ is a martingale using Theorem \ref{thm: martingale criterion}.
    
    Prior to the jump $(t<\gamma)$, the process is $M_t=-\Lambda t$. 
    Thus, $F_t=-\Lambda t$, which is an $\cH$-measurable process with continuous (and therefore càdlàg) paths. 
    At and after the jump $(t\geq \gamma)$, the process is $M_t=1-\Lambda\gamma$, so we identify $L=1-\Lambda\gamma$.
    
    To check the martingale criterion, we compute $\EE[M_t-M_0\mid \cH]$.
    Since $M_0=0$, we have:
    $$
    \EE[M_t\mid\cH]=\EE\left[-\Lambda t \ind_{\{t<\gamma\}}+(1-\Lambda\gamma)\ind_{\{t\geq\gamma\}}\mid\cH\right].
    $$
    Using the conditional density $f(s)=\Lambda e^{-\Lambda s}$, the expectation evaluates to:
    $$
    -\Lambda t P(\gamma>t\mid \cH)+ \int_0^t (1-\Lambda s)\Lambda e^{-\Lambda s}ds=0.
    $$
    Since the conditional expectation is identically zero, by Theorem \ref{thm: martingale criterion}, the process $M$ is indeed a martingale on $\cT$.
\end{ex}
\section{Local Martingales.}
In the previous section, we established the necessary and sufficient conditions for a process to be a martingale in a single-jump filtration with initial information $\cH$.

However, in many advanced stochastic models, particularly in mathematical finance, processes may fail to be true martingales due to a lack of global integrability, while still preserving the local martingale property. In this section, we extend our framework to the broader class of local martingales.

To build a rigorous foundation for our main result (Theorem \ref{thm: local martingale criterion}), we first establish two essential technical propositions detailing the behavior of localizing sequences and the pre-jump properties of local martingales.

\begin{prop}\label{prop: M^tau is martingale => M is local martingale}
    Let $M$ be the process defined by Equation \ref{eq: M_t=F(t)1[t<g]+L1[t>=g]}. If $(M_t)_{t\in\cT}$ is a martingale, then $M$ is a local martingale on $\RRl_+$.
\end{prop}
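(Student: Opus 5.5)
The plan is to build an explicit localizing sequence from deterministic times in $\cT$. I will use only the structure \eqref{eq: M_t=F(t)1[t<g]+L1[t>=g]}, namely that $M$ is frozen at the value $L$ from time $\gamma$ on. First I describe $\cT$. Since $t\mapsto \PP(\gamma\geq t)$ is non-increasing, $\cT$ is an interval starting at $0$. There are three cases:
\begin{itemize}
\item $t_G=\infty$, in which case $\cT=\RR_+$;
\item $t_G<\infty$ and $\PP(\gamma=t_G)>0$, in which case $\cT=[0,t_G]$;
\item $t_G<\infty$ and $\PP(\gamma= t_G)=0$, in which case $\cT=[0,t_G)$ and $\gamma<t_G$ a.s.
\end{itemize}
In every case I choose deterministic $t_n\in\cT$ with $t_n\uparrow\sup\cT$. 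In the second case I simply take $t_n=t_G$.

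Next I define
$$
T_n:=t_n\ind_{\{t_n<\gamma\}}+\infty\cdot\ind_{\{\gamma\leq t_n\}}.
$$
I check that $T_n$ is a stopping time via the definition \eqref{def: filtration}. For $t<t_n$ the event $\{T_n\le t\}$ is empty. For $t\ge t_n$ we have $\{T_n\leq t\}\cap\{t<\gamma\}=\{t<\gamma\}$, since $t<\gamma$ forces $t_n<\gamma$. So in both cases $C=\varnothing$ or $C=\Omega$ works. Alternatively one can invoke Proposition \ref{prop: stopping time critetion} with the constant $S=t_n$. The sequence $T_n$ is increasing because $t_n$ increases and $\{\gamma\le t_n\}$ increases.

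I then show $T_n\to\infty$ a.s. Whenever $\gamma<\sup\cT$ (or $\gamma\le t_G$ in the second case), eventually $\gamma\le t_n$, and then $T_n=\infty$. In the first case $t_n\to\infty$ anyway. In the third case $\gamma<t_G$ a.s. by the case description above. In the second case $\gamma\le t_G=t_n$ a.s.

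The key identity is $M^{T_n}=M^{t_n}$, where $M^{t_n}_t=M_{t\wedge t_n}$. On $\{t_n<\gamma\}$ this holds by definition of $T_n$. On $\{\gamma\le t_n\}$ we have $M_t=L=M_{t_n}$ for all $t\ge\gamma$, including $t=\infty$. Hence $M^{T_n}=M=M^{t_n}$ there.

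It remains to see that $M^{t_n}$ is a martingale on $\RRl_+$. For $s\le t$ both in $[0,t_n]\subset\cT$ this is the hypothesis. For $t\ge t_n$ the value $M^{t_n}_t=M_{t_n}$ is $\cF_{t_n}$-measurable and integrable. For $s\le t_n\le t$ we get $\EE(M^{t_n}_t|\cF_s)=\EE(M_{t_n}|\cF_s)=M_s$, and for $t_n\le s\le t$ the stopped process is constant in time. This also covers $t=\infty$, because $M_{t_n}$ is $\cF_\infty$-measurable.

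The main obstacle is the endpoint bookkeeping. In the third case one cannot stop at $t_G$ itself, since $t_G\notin\cT$ and $M$ need not be integrable there. The construction avoids this by letting $T_n=\infty$ after the jump. I must therefore verify carefully that $\gamma<\sup\cT$ a.s. in that case, which is exactly what makes $T_n\to\infty$ a.s. A second point to state explicitly is that, because of the null-set augmentation, the exceptional set $\{\gamma\ge t_G\}$ in the third case causes no measurability problem.
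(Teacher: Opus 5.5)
Your proof is correct. Its core is the paper's own construction: the localizing sequence $T_n=t_n\ind_{\{t_n<\gamma\}}+\infty\cdot\ind_{\{\gamma\le t_n\}}$, the identity $M^{T_n}=M^{t_n}$, and the fact that a martingale stopped at a deterministic time $t_n\in\cT$ is a uniformly integrable martingale on $\RRl_+$.

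The paper, however, uses this construction only when $t_G<\infty$ and $\cT=[0,t_G)$. It disposes of the other two cases by asserting that $M$ is then ``trivially'' a uniformly integrable martingale on $\RRl_+$. You instead run the same localization with $t_n\uparrow\sup\cT$ in all three cases.

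For $t_G\in\cT$ this changes nothing, since your $T_n=\infty$ a.s. For $t_G=+\infty$, though, your route is the one that actually works. A martingale indexed by $\cT=\RR_+$ need not be uniformly integrable. Even when it is, the terminal value $M_\infty=L$ on $\{\gamma=+\infty\}$ is not constrained by the hypothesis at all. For example, take $\gamma\equiv+\infty$, $F\equiv F(0)$, and an $\cH$-measurable $L$ with $\PP(L\neq F(0))>0$. This is exactly why the paper later needs the extra condition $\EE[L\mid\cH]=F(0)$ on $\{\gamma=+\infty\}$ for the martingale property on $\RRl_+$.

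One thing you should state explicitly is which definition of localization you use. When $\PP(\gamma=+\infty)>0$, your $T_n$ equals $t_n<\infty$ on $\{\gamma=+\infty\}$ for every $n$. So you rely on the convention that only requires $T_n\uparrow+\infty$ a.s., which is the one the paper itself adopts in its later proofs. You do not (and cannot) get the stronger property $T_n=+\infty$ eventually. Under that stronger convention the example above shows the statement itself would fail. In that example the filtration is constant ($\cF_t=\cH$), so for any stopping time $T$ the martingale property of $M^T$ forces $L=F(0)$ a.s. on $\{T=+\infty\}$.
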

\begin{proof}
    If $t_G =+\infty$ or $t_G < +\infty$ and $t_G\in\cT$, then $M$ is trivially a uniform martingale on $\RRl_+$. 
    Thus, we consider the case when $t_G < +\infty$ and $\cT=[0,t_G)$.
    Let us choose an increasing sequence $t_1 <\dots < t_n <\dots< t_G$,  such that $t_n\to t_G$. We define:
    $$ 
    T_n:=\left\{
    \begin{array}{ll}
    t_n, & \hbox{if $\gamma >t_n$;} \\
    +\infty, & \hbox{otherwise.}
    \end{array}
    \right.
    $$
    Let us show that $T_n$ is a stopping time. For any $t\geq0$, if $t_n\leq t$, then $\{T_n <t\}=\{\gamma>t_n\}\in\cF_{t_n}\subset\cF_t$; otherwise, $\{T_n\leq t\}=\varnothing\in\cF_t$.
    
    Hence, $T_n$ is a stopping time. Since $P(\gamma<t_G)=1$, we have $T_n\uparrow+\infty$ a.s.
    
    Now consider the stopped process $M^{T_n}_t=M_{t\wedge T_n}$.
    
    If $t < t_n$, then $M_{t\wedge T_n}=M_t$.
    
    If $t\geq t_n$ and $\gamma>t_n$, then $M_{t\wedge T_n}=M_{t_n}$.
    
    If $t\geq t$ and $\gamma\leq t_n$, then $M_{t\wedge T_n}=M_t=L=M_{t_n}$.
    
    As a result, we deduced the identity $M_{t\wedge T_n}=M_{t\wedge t_n}$ for all $t\in\RRl_+$.
    
    Note that since $(M_t)_{t\in\cT}$ is a martingale, the deterministically stopped process $(M_{t\wedge t_n})_{t\in\RRl_+}$ is a uniformly integrable martingale. This means that $M^{T_n}$ is a martingale on $\RRl_+$, and therefore, $M$ is a local martingale.
\end{proof}

\begin{prop}\label{prop: gamma<8 & M is local martingale => M^tau martingale}
    Assume $t_G<+\infty$, $\EE[|L|]<+\infty$ and $\EE[\sup\limits_{t\leq t_G}|F(t)|]<+\infty$.
    Let $M$ be the process defined by Equation \ref{eq: M_t=F(t)1[t<g]+L1[t>=g]}.
    A necessary condition for the process $(M_t)_{t\in\cT}$ to be a martingale is that either $\gamma<t_G$ a.s. or $\EE(L|\cH)\ind _{\{\gamma=t_G\}}=F(0)\ind_{ \{\gamma=t_G\}}$.
\end{prop}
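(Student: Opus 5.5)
The plan is to split into two cases according to whether the atom $\{\gamma=t_G\}$ carries positive probability, and in the nontrivial case to evaluate the characterization of Theorem \ref{thm: martingale criterion} at the terminal time $t=t_G$. First I note that $t_G<+\infty$ together with the definition of $t_G$ gives $G(t)=1$ for all $t>t_G$, hence $\PP(\gamma\le t_G)=1$. Consequently, if $\PP(\gamma=t_G)=0$, then $\gamma<t_G$ a.s., and the first alternative of the statement holds. There is nothing more to prove in this case.

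In the remaining case $\PP(\gamma=t_G)>0$, we have $\PP(\gamma\ge t_G)\ge \PP(\gamma=t_G)>0$, so $t_G\in\cT$ and $\cT=[0,t_G]$. Since $(M_t)_{t\in\cT}$ is assumed to be a martingale, the necessity part of Theorem \ref{thm: martingale criterion} applies with $t=t_G$. It gives $\EE(|M_{t_G}|)<\infty$ and $\EE(M_{t_G}-M_0\mid\cH)=0$ a.s. The integrability is in any case guaranteed by the hypotheses $\EE|L|<\infty$ and $\EE\sup_{t\le t_G}|F(t)|<\infty$, which give $|M_t|\le |L|+\sup_{s\le t_G}|F(s)|$.

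Next I identify both terms explicitly via the representation \eqref{eq: M_t=F(t)1[t<g]+L1[t>=g]}. Because $\PP(\gamma>0)=1$, we have $M_0=F(0)$ a.s., and $F(0)$ is $\cH$-measurable. Because $\gamma\le t_G$ a.s., we have $\{t_G<\gamma\}$ null, and so $M_{t_G}=L$ a.s. Substituting gives $\EE(L\mid\cH)=F(0)$ a.s. on all of $\Omega$. Multiplying this a.s. identity by $\ind_{\{\gamma=t_G\}}$ yields $\EE(L\mid\cH)\ind_{\{\gamma=t_G\}}=F(0)\ind_{\{\gamma=t_G\}}$. Note that this step is legitimate even though the indicator is not $\cH$-measurable, since we only multiply an a.s. equality and do not take a conditional expectation.

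The only delicate point I foresee is making sure the evaluation at the endpoint is legitimate, i.e.\ that $t_G$ really belongs to $\cT$. This is exactly where the case split is needed: when the atom at $t_G$ is null, $\cT=[0,t_G)$ and no endpoint condition is available. As a robustness check, or if one prefers to stay inside $[0,t_G)$, I would alternatively pass to the limit $t\uparrow t_G$ in $\EE(M_t-F(0)\mid\cH)=0$. The majorant $|L|+\sup_{t\le t_G}|F(t)|$ permits conditional dominated convergence. The limit of $M_t$ is $L\ind_{\{\gamma<t_G\}}+F(t_G-)\ind_{\{\gamma=t_G\}}$ by the càdlàg property of $F$, which gives an equivalent relation involving the left limit $F(t_G-)$. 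The main argument, however, is the direct endpoint evaluation described above.
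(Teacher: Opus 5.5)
Your proof is correct and follows the decisive step of the paper's argument: you split on whether $\PP(\gamma=t_G)>0$, and in that case you use $t_G\in\cT$ to read off $\EE(M_{t_G}-M_0\mid\cH)=0$ at the endpoint. You skip the paper's preliminary localization through Lemma~\ref{lem: stop time criterion} and dominated convergence, which is superfluous once $(M_t)_{t\in\cT}$ is assumed to be a martingale, and your observation that $M_{t_G}=L$ a.s.\ on all of $\Omega$, made before multiplying by $\ind_{\{\gamma=t_G\}}$, is cleaner than the paper's ``on this specific set'' phrasing. One correction to your side remark: the relation obtained by letting $t\uparrow t_G$ is not equivalent to the target, because it holds automatically for every dominated martingale on $[0,t_G)$ and involves $L$ only on $\{\gamma<t_G\}$, so it cannot replace the endpoint evaluation.
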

\begin{proof}
     Since $M$ is a local martingale, there exists a localizing sequence of stopping times $(T_n)_{n\geq 1}$ such that $T_n\uparrow+\infty$ a.s., and each stopped process $M^{T_n}$ is a martingale.
     If there exists $n$ such that $P(T_n < \gamma) = 0$, then the process $(M_t)_{t\in\cT}$ is a trivially martingale.
     
     Thus, assume that $P(T_n < \gamma) > 0$ for all $n$.
     Applying Lemma \ref{lem: stop time criterion} to the sequence $(T_n)_{n\geq1}$, we construct an increasing sequence of $\FF$-stopping times $(R_n)_{n\geq1}$ such that $R_n\leq t_G$ a.s., and $R_n\uparrow t_G$ a.s. 
     
     %%%%
     To strictly establish that $M^{R_n}$ inherits the martingale property from $M^{T_n}$, we must  demonstrate their global indistinguishability on $[0,\infty)$.
     We analyze their pathwise coincidence by partitioning the sample space into two sets: $\{T_n<\gamma\}$ and $\{T_n\geq\gamma\}$.

     On the set $\{T_n<\gamma\}$, the construction in Lemma \ref{lem: stop time criterion} explicitly sets $R_n=T_n$.
     Consequently, the stopped processes coincide exactly pathwise for all $t\geq0$.

     On the complementary set $\{T_n\geq\gamma\}$, the jump $\gamma$ occurs before or at the stopping time $T_n$.
     Since $R_n=t_G\geq \gamma$ on this set, both stopping times occur after the jump.
     By the fundamental structure of the process $M$, for any $t\geq\gamma$, the process is constant and equal to $L$.
     Therefore, both stopped processes eventually equal $L$ (specifically, $M^{T_n}=L$ for $t\geq\gamma$, and similarly $M^{R_n}=L$ since $R_n\geq \gamma$).
     Since $M^{T_n}$ and $M^{R_n}$ possess càdlàg paths, which are uniquely determined by their values on a dense set of rational times, this eventual equality guarantees their indistinguishability on $[0,\infty)$ outside a single null set.

     This global indistinguishability rigorously justifies the preservation of the martingale property for $M^{R_n}$.
     %%%%
     
     Taking into account Remark \ref{rem: stop time criterion}, the stopped process is bounded by a random variable $Y=\sup\limits_{s\leq t_G}|F(s)|+|L|$.
     Given our initial assumptions that $\EE[\sup\limits_{t\leq t_G} |F(t)|]<+\infty$ and $\EE[|L|]<+\infty$, the random variable $Y$ is an integrable majorant ($\EE[Y]<+\infty$). 
     Consequently, Lebesgue's Dominated Convergence Theorem justifies the interchange of the limit and the expectation:
     $$
     \lim\limits_{n\to\infty}\EE[M_{t\wedge R_n}\mid \cH]=\EE\Big[\lim\limits_{n\to\infty} M_{t\wedge R_n}\mid \cH\Big]
     $$
     and
     $$
     \lim\limits_{n\to\infty}\EE[|M_{t\wedge R_n}|]=\EE\Big[\lim\limits_{n\to\infty} |M_{t\wedge R_n}|\Big]
     $$
     Therefore, $M$ is a martingale on $[0,t_G)$.
     
     We now consider two exhaustive cases: $P(\gamma=t_G)=0$ and $P(\gamma=t_G)>0$. 
     
     If $P(\gamma=t_G)=0$, then $\gamma<t_G$ a.s., which means $\cT=[0,t_G)$. In this case, $(M_t)_{t\in\cT}$ is immediately a martingale. 
     
     If $P(\gamma=t_G)>0$, then domain is $\cT=[0,t_G]$. For the process to be a martingale on the entire set $\cT$, the martingale properties must hold at the boundary point $t_G$.
     
     According to Theorem \ref{thm: martingale criterion}, this necessitates the integrability condition $\EE[|M_{t_G}|]<\infty$ and the centering condition $\EE[M_{t_G}-M_0\mid \cH]=0$ a.s. on the set $\{\gamma=t_G\}$.
     
     On this specific set, the jump occurs exactly at $t_G$, meaning $M_{t_G}=L$ and $M_0=F(0)$.
     Substituting this identity into the criterion directly yields $\EE[L\mid\cH]\ind_{\{\gamma=t_G\}}=F(0)\ind_{\{\gamma=t_G\}}$, concluding the proof.
\end{proof}

\begin{cor}[Local Martingale Criterion in Case  $\gamma<t_G<+\infty$ a.s.]\label{prop: P(gamma=t_G)=0 & M is local martingale => M^tau martingale}
    Assume $t_G<+\infty$, $P(\gamma=t_G)=0$, $\EE[|L|]<+\infty$, and $\EE[\sup\limits_{t\leq t_G} |F(t)|]<+\infty$.
    Let $M$ be the process defined by Equation \ref{eq: M_t=F(t)1[t<g]+L1[t>=g]}.
    Then the following statements are equivalent: 
    \begin{itemize}
        \item The process $M$ is a local martingale on $\RRl_+$.
        \item The process $(M_t)_{t\in[0,t_G)}$ is a martingale.
    \end{itemize}
\end{cor}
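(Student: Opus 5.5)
The equivalence splits into two directions, and both are essentially already available from the two preceding propositions. Since $P(\gamma=t_G)=0$ and $P(\gamma\le t_G)=1$, we have $P(\gamma\ge t_G)=0$. Hence $t_G\notin\cT$ and $\cT=[0,t_G)$ (up to the trivial identification of $\cT$ with $\{t:\PP(\gamma\ge t)>0\}$, which here is exactly $[0,t_G)$ because $G(t)<1$ for $t<t_G$). The two bullet points are therefore statements about the same index set that Propositions \ref{prop: M^tau is martingale => M is local martingale} and \ref{prop: gamma<8 & M is local martingale => M^tau martingale} address.

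\textbf{Martingale $\Rightarrow$ local martingale.} Assume $(M_t)_{t\in[0,t_G)}$ is a martingale. This is precisely the hypothesis of Proposition \ref{prop: M^tau is martingale => M is local martingale} with $\cT=[0,t_G)$. I would invoke it directly, checking two points:
\begin{itemize}
\item the localizing sequence $T_n=t_n\ind_{\{\gamma>t_n\}}+\infty\ind_{\{\gamma\le t_n\}}$ with $t_n\uparrow t_G$ satisfies $T_n\uparrow\infty$ a.s.;
\item $M^{T_n}=M^{t_n}$, and the latter is a uniformly integrable martingale because $t_n\in\cT$.
\end{itemize}
The first point uses exactly $P(\gamma<t_G)=1$, which is our hypothesis $P(\gamma=t_G)=0$ combined with $P(\gamma\le t_G)=1$. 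The integrability assumptions are not even needed in this direction.

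\textbf{Local martingale $\Rightarrow$ martingale.} Assume $M$ is a local martingale on $\RRl_+$ with localizing sequence $(T_n)$. If $P(T_n<\gamma)=0$ for some $n$, then $M=M^{T_n}$ up to evanescence on $[0,t_G)$, so $M$ is already a martingale there. Otherwise, Lemma \ref{lem: stop time criterion} applies; its hypothesis $P(\gamma<\infty)=1$ holds since $t_G<\infty$. It produces stopping times $R_n\le t_G$ with $R_n\uparrow t_G$ and $M^{R_n}$ indistinguishable from $M^{T_n}$, hence martingales.

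For fixed $s\le t<t_G$, I would pass to the limit in $\EE[M_{t\wedge R_n}\ind_A]=\EE[M_{s\wedge R_n}\ind_A]$ for $A\in\cF_s$. Dominated convergence applies with the majorant $Y=\sup_{u\le t_G}|F(u)|+|L|$, which is integrable by assumption. Since $R_n\uparrow t_G>t$ a.s., we eventually have $t\wedge R_n=t$ a.s. (on $\{R_n=t_G\}$ trivially, and $R_n<t$ only on a set whose probability tends to $0$). This yields the martingale property and integrability on $[0,t_G)$. This is exactly the first half of the proof of Proposition \ref{prop: gamma<8 & M is local martingale => M^tau martingale}, whose case $P(\gamma=t_G)=0$ then concludes.

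\textbf{Main obstacle.} The only delicate point is the limit step: ensuring $t\wedge R_n\to t$ a.s. for $t<t_G$, and not merely that $R_n\to t_G$ from some subsequence. Monotonicity of $R_n$ (inherited from $T_n$ on $\{T_n<\gamma\}$ and equal to $t_G$ otherwise) settles this. The domination by $Y$ is then what rules out loss of mass, and the boundary issue at $t_G$ disappears because $t_G\notin\cT$.
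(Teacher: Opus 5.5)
Your proposal is correct and is essentially the paper's argument. The corollary combines Proposition \ref{prop: M^tau is martingale => M is local martingale} (localization by $T_n=t_n$ on $\{\gamma>t_n\}$, $+\infty$ otherwise) for one direction with the first half of the proof of Proposition \ref{prop: gamma<8 & M is local martingale => M^tau martingale} (passage to the $R_n$ of Lemma \ref{lem: stop time criterion} plus dominated convergence with majorant $\sup_{u\le t_G}|F(u)|+|L|$) for the other, and $\cT=[0,t_G)$ because $P(\gamma\ge t_G)=P(\gamma=t_G)=0$. As a minor simplification, since only fixed $t<t_G$ matter, the bound $|M_{t\wedge T_n}|\le\sup_{u\le t}|F(u)|+|L|$ already holds for the original localizing sequence and $t\wedge T_n=t$ eventually a.s., so you need neither the passage to $R_n$, nor the case split on $P(T_n<\gamma)$, nor your ``main obstacle''.
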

\begin{prop}\label{prop: gamma=8 & M is local martingale => M^tau martingale}
    Assume $P(\gamma=+\infty)>0$, $\EE[\sup\limits_{t\in[0,+\infty)} |F(t)|]<+\infty$ and $\EE[|L|]<+\infty$. 
    Let $M$ be a local martingale on $\RRl_+$ defined by $M_t=F(t)\ind_{\{t<\gamma\}}+L\ind_{\{t\geq\gamma\}}$. 
    A necessary condition for the process $(M_t)_{t\in\RRl_+}$ to be a martingale is that the following equality holds on the set $\{\gamma=+\infty\}$: 
    $$
    \EE[L\mid\cH]\ind_{\{\gamma=+\infty\}}=F(0)\ind_{\{\gamma=+\infty\}}
    $$
\end{prop}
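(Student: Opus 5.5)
The plan is to mirror the proof of Proposition \ref{prop: gamma<8 & M is local martingale => M^tau martingale}, with the boundary point $t_G$ replaced by $+\infty$. Let $(M_t)_{t\in\RRl_+}$ be a martingale. We have $\cH\subset\cF_0\subset\cF_\infty$, and the tower property gives $\EE[M_\infty-M_0\mid\cH]=0$ a.s. This is exactly the necessity direction of Theorem \ref{thm: martingale criterion}, applied with $\infty$ included in the time set. The whole task is then to identify $M_\infty$ and $M_0$ on $\{\gamma=+\infty\}$ and to convert this global centering into the local identity stated on that set.

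The first step is to pin down the terminal value. Since $\PP(\gamma>0)=1$, we have $M_0=F(0)$ a.s. On $\{\gamma<\infty\}$ we have $M_\infty=L$. On $\{\gamma=+\infty\}$ the value $M_\infty$ is $\lim_{t\to\infty}F(t)$ whenever the martingale is closed at infinity. To get such a closed martingale from the local martingale hypothesis, take a localizing sequence $T_n\uparrow\infty$. Then $|M_{t\wedge T_n}|\le Y:=\sup_{t\ge0}|F(t)|+|L|$ with $\EE Y<\infty$, so dominated convergence (as in Remark \ref{rem: stop time criterion}) shows that $M$ is a uniformly integrable martingale on $[0,\infty)$. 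Hence $M_\infty=\lim_t M_t$ exists a.s. and in $L^1$. Following the convention of the paper, the process on $\{t\ge\gamma\}$ equals $L$, and this includes $t=+\infty\ge\gamma$ on $\{\gamma=\infty\}$. So I read $M_\infty=L$ on all of $\Omega$, and $L$ must then agree with $\lim_t F(t)$ on $\{\gamma=\infty\}$ for the path to be càdlàg at $\infty$.

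The next step is to localize $\EE[L-F(0)\mid\cH]=0$ to $\{\gamma=+\infty\}$. I would write $L-F(0)=(M_\infty-M_t)+(M_t-M_0)$. On $\{t<\gamma\}$ the middle difference is $F(t)-F(0)$, and $\EE[M_t-M_0\mid\cH]=0$ for every $t$. I would then use the martingale property relative to $\cF_t$ in the same way as the sufficiency part of Theorem \ref{thm: martingale criterion}. There, on $\{t<\gamma\}$, the variable $\EE[M_\infty-M_t\mid\cF_t]$ is equal to an $\cH$-measurable $\eta_t$, computed as $\EE[(M_\infty-M_t)\ind_{\{t<\gamma\}}\mid\cH]/\PP(t<\gamma\mid\cH)$, and the martingale property gives $\eta_t=0$. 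Letting $t\to\infty$, the sets $\{t<\gamma\}$ decrease to $\{\gamma=\infty\}$, and $F(t)\to L$ on that set by dominated convergence with majorant $Y$. Passing to the limit in $\EE[(L-F(t))\ind_{\{t<\gamma\}}\mid\cH]=0$ gives $\EE[(L-\lim F)\ind_{\{\gamma=\infty\}}\mid\cH]=0$. Combined with $\EE[M_\infty-M_0\mid\cH]=0$ and the centering identity at finite times, this should yield $\EE[L\mid\cH]\ind_{\{\gamma=\infty\}}=F(0)\ind_{\{\gamma=\infty\}}$.

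I expect this localization to be the main obstacle. A conditional expectation given $\cH$ cannot simply be restricted to the set $\{\gamma=\infty\}$, because that set is not $\cH$-measurable. The argument therefore has to go through the identity $\EE[\,\cdot\,\ind_{\{\gamma=\infty\}}\mid\cH]$ divided by $\PP(\gamma=\infty\mid\cH)$, which is positive a.s. on $\{\gamma=\infty\}$. It also has to reconcile the value $F(0)$ with $\lim F(t)$ through the centering $\EE[F(t)\ind_{\{t<\gamma\}}+L\ind_{\{t\ge\gamma\}}\mid\cH]=F(0)$. I would first try to obtain the claim exactly as the paper does at $t_G$: substitute $M_\infty=L$ and $M_0=F(0)$ on $\{\gamma=\infty\}$ into the criterion of Theorem \ref{thm: martingale criterion} evaluated at $t=\infty$. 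I would treat the conditioning subtlety as the step to be checked carefully.
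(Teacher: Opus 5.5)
Your first paragraph, combined with your reading $M_\infty=L$ on all of $\Omega$ and $M_0=F(0)$ a.s., is exactly the paper's argument and already completes the proof. The reading $M_\infty=L$ is forced, since $\{+\infty\ge\gamma\}=\Omega$. The tower identity $\EE[M_\infty-M_0\mid\cH]=0$ then becomes $\EE[L\mid\cH]=F(0)$ a.s. on the whole space. Multiplying this a.s.\ equality by $\ind_{\{\gamma=+\infty\}}$ gives the claim. The paper does the same thing using $\EE[M_{+\infty}\mid\cF_0]=M_0$ and $\cF_0=\cH$ up to null sets.

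The ``localization'' you call the main obstacle does not exist. The left-hand side of the statement is the product $\EE[L\mid\cH]\cdot\ind_{\{\gamma=+\infty\}}$, not $\EE[L\ind_{\{\gamma=+\infty\}}\mid\cH]$, so it does not matter that $\{\gamma=+\infty\}$ is not $\cH$-measurable. Your caution is warranted at only one point. Replacing $\EE[M_\infty\mid\cH]$ by $\EE[L\mid\cH]$ needs $M_\infty=L$ everywhere, not merely on $\{\gamma=+\infty\}$ as the paper words it. You handled that correctly.

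You should drop the detour through $\eta_t$ and the division by $\PP(\gamma=+\infty\mid\cH)$. It aims at a different identity, and that identity is false in general. Here is an example:
\begin{itemize}
\item Take $\cH$ trivial, $\PP(\gamma=1)=\PP(\gamma=+\infty)=\tfrac12$, $F(t)=\ind_{\{t\ge 1\}}$, and $L=\ind_{\{\gamma=+\infty\}}-\ind_{\{\gamma=1\}}$.
\item Then $M_t=0$ for $t<1$ and $M_t=L$ for $t\in[1,+\infty]$. This is a bounded martingale on $\RRl_+$ satisfying all the hypotheses, and indeed $\EE[L\mid\cH]=0=F(0)$.
\item However, $\EE[(L-F(0))\ind_{\{\gamma=+\infty\}}\mid\cH]=\tfrac12\neq0$.
\item Also $\lim_{t\to\infty}F(t)=1\neq 0=F(0)$.
\end{itemize}
So neither the localized identity nor a ``reconciliation'' of $\lim F$ with $F(0)$ can be proved.

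Your limiting relation $\EE[(L-\lim F)\ind_{\{\gamma=+\infty\}}\mid\cH]=0$ is a tautology, because $F(t)\to L$ on $\{\gamma=+\infty\}$. That convergence comes from martingale convergence, not from dominated convergence. The closing ``this should yield'' is therefore not a derivation.

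Likewise, your uniform-integrability argument from the local martingale property is correct but unnecessary. For a necessary condition you may simply assume that $(M_t)_{t\in\RRl_+}$ is a martingale.
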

\begin{proof}
    Let $M$ be a local martingale on $\RRl_+$. 
    By definition, there exists a localizing sequence of stopping times $(T_n)_{n\geq1}$ such that $T_n\uparrow+\infty$ a.s., and each stopped process $M^{T_n}$ is a true martingale.
    
    To analyze the global martingale property at the terminal point $t=+\infty$, we must account for the entire sample space $\Omega$, which we naturally partition into two disjoint sets: $\{\gamma<+\infty\}$ and $\{\gamma=+\infty\}$.
    
    On the set $\{\gamma<+\infty\}$, the jump occurs in finite time.
    The localizing sequence $(T_n)_{n\geq1}$ eventually captures the terminal state $L$ (analogous to the mechanisms detailed in Proposition \ref{prop: gamma<8 & M is local martingale => M^tau martingale}).
    
    Therefore, to establish the necessary conditions specific to the extended horizon, we restrict our analysis to the complementary set $\{\gamma=+\infty\}$.
    
    Consider the subset $\{\gamma=+\infty\}$. On this set, the jump does not occur in finite time.
    Consequently, the filtration remains constant and equals the initial information ($\cF_t\equiv\cH$ for all $t<+\infty$).
    
    Since any local martingale with respect to a constant filtration is constant in time, the process takes the time-independent value $M_t=F(0)$ for all $t<+\infty$ on this set.
    
    For any fixed $n$, the stopping time $T_n$ is finite almost surely. 
    Therefore, $T_n<\gamma$ on the set $\{\gamma=+\infty\}$.
    
    Thus, the stopped process evaluated at infinity is exactly $M_{+\infty}^{T_n}=M_{T_n}=F(0)$.
    
    This demonstrates that the stopped process $M^{T_n}$ never reaches the terminal state and is entirely independent of the random variable $L$.
    Consequently, the local martingale property of $M$ inherently imposes no restrictions on $L$ at infinity.
    
    In order for the unstopped process $(M_t)_{t\in\RRl_+}$ to be a true martingale globally, it must satisfy the martingale definition directly at the terminal point $t=+\infty$.
    
    This requires $\EE[M_{+\infty}|\cF_0]=M_0$.
    
    Given our initial assumption that $\EE[|L|]<+\infty$, the conditional expectation is well-defined. Noting that $M_{+\infty}=L$ and $M_0=F(0)$ on the set $\{\gamma=+\infty\}$, and since $\cF_0=\cH$, multiplying both sides of the martingale equality by the indicator $\ind_{\{\gamma=+\infty\}}$ directly yields $\EE[L|\cH]\ind_{\{\gamma=+\infty\}}=F(0)\ind_{\{\gamma=+\infty\}}$.
\end{proof}

\begin{thm}[Local Martingale Criterion]\label{thm: local martingale criterion}
    Let $F=(F(t))_{t\in\cT}$ be an $\cH$-measurable process with càdlàg paths and $\EE[\sup\limits_{t\in[0,t_G)} |F(t)|]<+\infty$. Let $L$ be an integrable random variable (i.e., $\EE[|L|]<+\infty$).
    We define the process $M = (M_t)_{t\in\RRl_+} $ by the equality: 
    \begin{equation}
       M_t =F(t)\ind_{\{t<\gamma\} }+L\ind_{\{t\geq \gamma\}}. 
    \end{equation}   
    Then the following are equivalent: 
    \begin{enumerate}
        \item \label{thm: local martingale: martingale} $(M_t)_{t\in\cT}$ is a martingale.
        \item \label{thm: local martingale: local martingale} The process $M$ is local martingale on $\RRl_+$, and the following equality holds on the set $\{\gamma=t_G\}$: 
        $$
        \EE(L|\cH)\ind \{\gamma=t_G\}=F(0)\ind \{\gamma=t_G\}
        $$
    \end{enumerate}
\end{thm}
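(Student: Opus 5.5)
The proof splits into the two implications. For (\ref{thm: local martingale: martingale})$\Rightarrow$(\ref{thm: local martingale: local martingale}), Proposition \ref{prop: M^tau is martingale => M is local martingale} already gives that $M$ is a local martingale on $\RRl_+$. The boundary identity follows directly from Theorem \ref{thm: martingale criterion}. If $P(\gamma=t_G)=0$ there is nothing to check. If $P(\gamma=t_G)>0$ (in particular $t_G\in\cT$), then $\EE(M_{t_G}-M_0\mid\cH)=0$. We split this as
\[
M_{t_G}-M_0=(L-F(0))\ind_{\{\gamma\le t_G\}}+(F(t_G)-F(0))\ind_{\{t_G<\gamma\}},
\]
and restrict to $\{\gamma=t_G\}$. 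This is the same reduction as at the end of the proof of Proposition \ref{prop: gamma<8 & M is local martingale => M^tau martingale}, and it yields $\EE(L\mid\cH)\ind_{\{\gamma=t_G\}}=F(0)\ind_{\{\gamma=t_G\}}$. The case $t_G=+\infty$ with $P(\gamma=+\infty)>0$ is handled by Proposition \ref{prop: gamma=8 & M is local martingale => M^tau martingale}.

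For (\ref{thm: local martingale: local martingale})$\Rightarrow$(\ref{thm: local martingale: martingale}), we take a localizing sequence $(T_n)$ with each $M^{T_n}$ a martingale. If $P(T_n<\gamma)=0$ for some $n$, then $M^{T_n}=M$ up to evanescence and we are done. Otherwise, when $t_G<\infty$, we apply Lemma \ref{lem: stop time criterion} to obtain $R_n\uparrow t_G$ with $R_n\le t_G$. By the indistinguishability argument in the proof of Proposition \ref{prop: gamma<8 & M is local martingale => M^tau martingale}, each $M^{R_n}$ is a martingale. It is dominated by $Y=\sup_{s<t_G}|F(s)|+|L|$, which is integrable by hypothesis.

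For $t\in\cT$ with $t<t_G$, we have $M_{t\wedge R_n}\to M_t$ a.s., since $R_n\uparrow t_G>t$. Dominated convergence for conditional expectations then passes $\EE(M_{t\wedge R_n}-M_0\mid\cH)=0$ to the limit, giving $\EE(M_t-M_0\mid\cH)=0$ and $\EE|M_t|<\infty$. Theorem \ref{thm: martingale criterion} therefore gives the martingale property on $[0,t_G)$. When $t_G=+\infty$ the same scheme works with $R_n$ replaced by $T_n$ itself: for fixed finite $t$, $M_{t\wedge T_n}\to M_t$, and the same domination applies.

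It remains to treat the boundary point $t_G$ when $t_G\in\cT$ with $t_G<\infty$, that is, when $P(\gamma=t_G)>0$. Here I would verify $\EE(M_{t_G}-M_0\mid\cH)=0$ directly. Since $P(\gamma\le t_G)=1$, we have $M_{t_G}=L$ a.s. I would split $L-F(0)$ on $\{\gamma<t_G\}$ and $\{\gamma=t_G\}$. On $\{\gamma<t_G\}$ the contribution is obtained as the limit of $(M_t-F(0))\ind_{\{\gamma\le t\}}$ as $t\uparrow t_G$, controlled by the already-established martingale property on $[0,t_G)$ and dominated convergence. The second piece should vanish by the hypothesized identity on $\{\gamma=t_G\}$.

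The main obstacle is this last step. The identity in (\ref{thm: local martingale: local martingale}) is stated with $\EE(L\mid\cH)$ multiplied by $\ind_{\{\gamma=t_G\}}$, which is not $\cH$-measurable. To combine it with $\EE(\cdot\mid\cH)$, I would first rewrite $\EE(L\ind_{\{\gamma=t_G\}}\mid\cH)$ using the fact that $L$ is integrable. I would then try to show, using the limit of $\EE(M_t-M_0\mid\cH)=0$ as $t\uparrow t_G$, that the missing mass is exactly $\EE((L-F(t_G-))\ind_{\{\gamma=t_G\}}\mid\cH)$. The hypothesis, together with the left limit of $F$, should kill this term. If the interpretation of the indicator identity proves too weak for this, I would fall back on reading the condition as holding after conditioning on $\cH$, in the same sense in which it was derived in the necessity direction.
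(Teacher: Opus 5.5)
\textbf{The gap.} The gap is the step you flag yourself. In (2)$\Rightarrow$(1) with $t_G<+\infty$ and $\PP(\gamma=t_G)>0$ you never prove $\EE(M_{t_G}-M_0\mid\cH)=0$. Your fallback of reinterpreting the hypothesis would amount to proving a different statement.

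The difficulty comes from letting $t\uparrow t_G$, which loses the jump on $\{\gamma=t_G\}$. Keep $t=t_G$ fixed and let $n\to\infty$ instead:
\begin{itemize}
\item Since $\gamma\le t_G$ a.s., the event $t_G\wedge T_n<\gamma$ forces $t_G\wedge T_n<t_G$, so $|M_{t_G\wedge T_n}|\le Y$.
\item $M_{t_G\wedge T_n}=M_{t_G}$ as soon as $T_n\ge t_G$, which happens eventually a.s.
\item $\EE(M_{t_G\wedge T_n}-M_0\mid\cH)=0$ holds because $M^{T_n}$ is a martingale and $\cH\subset\cF_0$. Conditional dominated convergence then gives $\EE(M_{t_G}-M_0\mid\cH)=0$.
\end{itemize}
Theorem~\ref{thm: martingale criterion} then applies on all of $\cT$, and the passage to $R_n$ via Lemma~\ref{lem: stop time criterion} is not even needed. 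So for $t_G<+\infty$ the boundary identity is not used in this direction. It is automatic, just as your necessity computation in fact gives $\EE(L\mid\cH)=F(0)$ a.s. on all of $\Omega$.

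Your own route can also be closed, but not as sketched: the hypothesis controls $\EE(L\mid\cH)$, not $\EE((L-F(t_G-))\ind_{\{\gamma=t_G\}}\mid\cH)$. Let $B=\{\PP(\gamma=t_G\mid\cH)>0\}$ and $D=\{\EE(L\mid\cH)\ne F(0)\}$, both in $\cH$. The hypothesis says $0=\PP(D\cap\{\gamma=t_G\})=\EE[\ind_D\,\PP(\gamma=t_G\mid\cH)]$, i.e.\ $\EE(L\mid\cH)=F(0)$ a.s.\ on $B$. On $B^c$ the event $\{\gamma=t_G\}$ is null, so your missing-mass term vanishes there. The paper's own proof handles this point only by citing the proposition for $t_G<+\infty$, which is stated as a necessary condition. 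So you have located the real issue correctly.

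\textbf{The case $\PP(\gamma=+\infty)>0$.} A second problem is (1)$\Rightarrow$(2) in this case. The proposition you invoke assumes that $(M_t)_{t\in\RRl_+}$ is a martingale, i.e.\ it includes $t=+\infty$ with $M_{+\infty}=L$. With $\cT\subset\RR_+$ as defined, statement (1) never involves $L$ on $\{\gamma=+\infty\}$, and the implication is false. For $\gamma\equiv+\infty$, $F\equiv0$, $L\equiv1$ we have $M_t=0$ for every $t<+\infty$, yet $\EE(L\mid\cH)\ind_{\{\gamma=+\infty\}}=1\ne0=F(0)\ind_{\{\gamma=+\infty\}}$.

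So you must either restrict that part to $t_G<+\infty$, or read $+\infty\in\cT$ whenever $\PP(\gamma=+\infty)>0$ (as the paper's third case tacitly does). Under the second reading, your converse stops at finite $t$ and misses the point $+\infty$. There the local martingale property imposes nothing on $L$ on $\{\gamma=+\infty\}$, so the hypothesis is genuinely needed. The $B$-argument above gives $\EE(M_{+\infty}-M_0\mid\cH)=0$, taking $B=\{\PP(\gamma=+\infty\mid\cH)>0\}$ and $F(+\infty-)$ to be the a.s.\ limit of the dominated martingale on $\{\gamma=+\infty\}$.
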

\begin{proof}
    The underlying structural requirements—specifically, that $F$ is an $\cH$-measurable càdlàg process and the integrability conditions $\EE[\sup\limits_{t \in [0,t_G)} |F(t)|] < +\infty$ and $\EE[|L|] < +\infty$ hold—ensure that the fundamental Martingale Criterion (Theorem \ref{thm: martingale criterion}) is applicable across all subsequent propositions.

    \eqref{thm: local martingale: martingale} $\Longrightarrow$ \eqref{thm: local martingale: local martingale}: If $(M_t)_{t\in\cT}$ is a martingale, Proposition \ref{prop: M^tau is martingale => M is local martingale} directly guarantees that $M$ is a local martingale on $\overline{\mathbb{R}}_+$.
    
    \eqref{thm: local martingale: local martingale} $\Longrightarrow$ \eqref{thm: local martingale: martingale}: Assume $M$ is a local martingale on $\overline{\mathbb{R}}_+$ and the boundary equality holds.
    We demonstrate that $(M_t)_{t\in\cT}$ is a martingale by partitioning the analysis into three exhaustive cases based on the distribution of $\gamma$:
    \begin{itemize}
        \item $\gamma < +\infty$ a.s. and $P(\gamma = t_G) = 0$. The domain is strictly $\cT = [0, t_G)$. According to Corollary \ref{prop: P(gamma=t_G)=0 & M is local martingale => M^tau martingale}, the local martingale property, combined with the integrability of $L$ and the bounded supremum of $F$, is sufficient to conclude that $(M_t)_{t\in\cT}$ is a martingale.
        \item  $\gamma < +\infty$ a.s. and $P(\gamma = t_G) > 0$. The domain is $\cT = [0, t_G]$. Proposition \ref{prop: gamma<8 & M is local martingale => M^tau martingale} dictates that, alongside the integrability conditions, the true martingale property requires the explicit martingale equality at $t_G$. Since this equality is given by our assumption, $(M_t)_{t\in\cT}$ is a martingale.
        \item $P(\gamma = +\infty) > 0$ (i.e., $t_G = +\infty$). The boundary is at infinity. Proposition \ref{prop: gamma=8 & M is local martingale => M^tau martingale} establishes that the local martingale property, supported by the integrability conditions and the specific boundary equality at $+\infty$, ensures the martingale property on the entire extended axis $\overline{\mathbb{R}}_+$.
    \end{itemize}
    Thus, in all possible scenarios, $(M_t)_{t\in\cT}$ is a true martingale. 
\end{proof}

\section{Compliance with Ethical Standards}

\noindent \textbf{Funding:} The author declares that no funds, grants, or other support were received during the preparation of this manuscript.\\
\noindent \textbf{Disclosure of potential conflicts of interest:} The author has no relevant financial or non-financial interests to disclose.\\
\noindent \textbf{Authors Contribution:} Not applicable. The manuscript was written entirely by the sole author. \\
\noindent \textbf{Data Availability Statement:} Data sharing is not applicable to this article as no datasets were generated or analyzed during the current study.

\section{Acknowledgments}
The author wishes to express her sincere gratitude to her scientific advisor, Alexander Alexandrovich Gushchin, for suggesting the problem framework, continuous support, and insightful mathematical discussions that helped shape this work.

\end{document}